\documentclass[12pt]{extarticle}
\usepackage{amsmath, amsthm, amssymb, color}
\usepackage[colorlinks=true,linkcolor=blue,urlcolor=blue,citecolor=teal]{hyperref}
\usepackage{graphicx}
\usepackage{caption}
\usepackage{mathtools}
\usepackage{enumitem}
\usepackage{verbatim}
\usepackage{tikz}
\usetikzlibrary{matrix}
\usetikzlibrary{arrows}
\usepackage{algorithm}
\usepackage[noend]{algpseudocode}
\usepackage{caption}
\usepackage[normalem]{ulem}
\usepackage{subcaption}
\tolerance 10000
\headheight 0in
\headsep 0in
\evensidemargin 0in
\oddsidemargin \evensidemargin
\textwidth 6.5in
\topmargin .25in
\textheight 8.8in

\synctex=1
\usepackage{makecell}
\usepackage{multirow,array}

\newtheorem{theorem}{Theorem}
\newtheorem{proposition}[theorem]{Proposition}
\newtheorem{lemma}[theorem]{Lemma}

\theoremstyle{definition}
\newtheorem{definition}[theorem]{Definition}

\newtheorem{remark}[theorem]{Remark}
\newtheorem{conjecture}[theorem]{Conjecture}

\newtheorem{example}[theorem]{Example}

\newcommand{\PP}{\mathbb{P}}
\newcommand{\RR}{\mathbb{R}}

\newcommand{\CC}{\mathbb{C} }

\title{\bf Geometry of Dependency Equilibria}
\author{Irem Portakal and Bernd Sturmfels}

\date{}

\begin{document}
\maketitle

\begin{abstract} \noindent
An $n$-person game is specified by $n$ tensors of the same format. We view
 its equilibria as points in that tensor space.
Dependency equilibria are defined by
linear constraints on conditional probabilities, and thus by determinantal
quadrics in the tensor entries. These equations cut out the
Spohn variety, named after
the philosopher who introduced dependency equilibria.
The Nash equilibria among these are the tensors of rank one.
We study the real algebraic geometry of the Spohn variety.
This variety is rational, except for $2 \times 2$ games, when it is
an elliptic curve. For
$3 \times 2$ games, it is a del Pezzo surface of degree two.
We characterize the payoff regions and their boundaries
using oriented matroids,
and we develop the connection  to 
Bayesian networks in statistics.
\end{abstract}

\section{Introduction}
\label{sec1}

The geometry of Nash equilibria has been a topic of considerable
interest in economics, mathematics and computer science. It is known,
thanks to Datta's Universality Theorem \cite{datta}, that the
set of Nash equilibria can be an essentially arbitrary semialgebraic set.
Yet, a game with generic payoff tables has only finitely many
Nash equilibria, with tight bounds known for their number \cite{MCKELVEY1997411}.
They can be found with the tools of computational algebraic~geometry.

For many games one encounters Nash equilibria
with undesirable or counterintuitive properties.
This issue has been a concern not just in the economics literature, but also 
in philosophy.  
Several authors proposed more inclusive notions of
equilibria. One of these is the concept 
of {\em correlated equilibria}, due to Aumann~\cite{aumann2}. In this concept,  one augments the original game with a coordination device which allows players to coordinate actions (i.e. \ a joint probability distribution).
These equilibria form a convex polytope in tensor space,
studied in \cite{brandenburgportakal, NCH},
with Nash equilibria being precisely the rank one tensors.

In this article, we examine another inclusive notion of equilibria,
introduced by a prominent philosopher, Wolfgang Spohn, in his 
articles \cite{homo, spohn}. Spohn's notion of dependency equilibria
leads to interesting structures in nonlinear algebra \cite{MS}.
Unlike the polyhedral setting of correlated equilibria, the characterization
of dependency equilibria requires nonlinear polynomials,
even for two-player games. This is the reason why they are interesting for us.

Spohn offers the following warning about the nonlinear algebra that arises in his approach: {\em
The computation of dependency equilibria seems to be a messy business. Obviously it requires one to solve quadratic equations in two-person games, and the more persons, the higher the order of the polynomials we become entangled with. All linear ease is lost. Therefore, I cannot offer a well developed theory of dependency equilibria} \cite[page 779, Section 3]{spohn}.

\smallskip

This paper
 lays the foundations for the desired theory, by introducing
novel algebraic varieties in tensor spaces.
The bemoaned loss of linear ease is our journey's point of departure.

It is useful to think of this article as a case study in algebraic statistics  \cite{BC, Sul}.
In that field one examines statistical models for $n$ discrete random variables.
Such a model is a semialgebraic set whose points are positive tensors
whose entries sum to one. These represent joint probability distributions,
and the statistical task is to identify points that best explain some given data set.
To address such an optimization problem, it is advantageous to 
relax the constraint that tensors are real and positive. Thus,
one replaces the model by its Zariski closure in a complex projective space,
and one studies algebro-geometric features -- such as
dimension, degree, equations, decomposition, and singularities -- of 
these varieties. 

The statistical model in this article is the set of dependency equilibria 
of an $n$-person game in normal form. These equilibria are 
real positive tensors whose entries sum to one.
Relaxing the reality constraints yields an algebraic
variety in complex projective space. This is called the {\em Spohn variety}
of the game,  in recognition of the fundamental work in \cite{homo, spohn}.

Our presentation is organized as follows.
In Section \ref{sec2} we review the basics on
 $n$-player games in normal form, and  we present 
the equations that define dependency equilibria.
After clearing denominators, these are
expressed as the $2 \times 2$ minors of $n$
matrices whose entries are linear forms in the entries of $P$.
Small cases are worked out in Examples \ref{ex:bach},
 \ref{ex: dependency equilibria of 2x2 game},
 \ref{ex:twotwotwo} and  \ref{ex:centipede}.
 
The Spohn variety $\mathcal{V}_X$ of a normal form game $X$ is formally introduced
in Section \ref{sec3}. We determine its dimension and degree in
Theorem \ref{thm:spohn}. The intersection of $\mathcal{V}_X$ with the Segre variety recovers
the Nash equilibria. Theorem \ref{thm:rational} shows that
$\mathcal{V}_X $ is generally rational, with
an explicit  rational parametrization.
 Example~\ref{ex:delpezzo}
covers Del Pezzo surfaces of degree~two.

Section \ref{sec4} offers a detailed study of the dependency equilibria for
$2 \times 2$ matrix games. This case is an exceptional case
because the Spohn variety $\mathcal{V}_X$ is not rational.
It is the intersection of two quadrics in $\PP^3$, hence
an elliptic curve, when the payoff matrices are generic.
A formula for the j-invariant  is given in Proposition  \ref{prop:jinv}. The
real picture is determined in
Theorem~\ref{thm:arcs}.

Section \ref{sec5} concerns the payoff region $\mathcal{P}_X$.
This is a semialgebraic subset of $\RR^n$,
visualized in Figures \ref{fig: payoff curve} and \ref{fig: payoff region}.
The points of $\mathcal{P}_X$ are the expected utilities of positive points on $\mathcal{V}_X$.
Theorem \ref{thm:OM} identifies that region 
in the oriented matroid stratification given by the Konstanz matrix $K_X(x)$.
Its algebraic boundaries are determinantal hypersurfaces,
such as the K3 surfaces in Example \ref{ex:222Konstanz}.
 These offer  an algebraic representation for
Pareto optimal equilibria.

 Section \ref{sec6} develops a perspective 
that offers dimensionality reduction 
and a connection to data analysis.
Namely, we consider conditional independence models,
in the sense of algebraic statistics \cite{BC, Sul}. These models are
represented by projective varieties. We focus on the
case of Bayesian networks \cite{GSS}. Their importance for
dependency equilibria was
already envisioned by Spohn in \cite[Section 3]{homo}.
This offers many opportunities for future research.

\section{Games, Tensors and Equilibria}
\label{sec2}

We work in the setting of normal form games, using the notation fixed in \cite[Section~6.3]{CBMS}.
Our game has $n$ players, labeled as $1,2,\ldots,n$. The $i$th player can select from $d_i$ pure
strategies. This set of pure strategies is taken to be $[d_i] = \{1,2,\ldots,d_i\}$. The game is specified by $n$ payoff tables
$X^{(1)}, X^{(2)}, \ldots, X^{(n)}$. Each payoff table is a tensor
of format $d_1 \times d_2 \times \cdots \times d_n$ whose entries are arbitrary real numbers.
The entry $X^{(i)}_{j_1 j_2 \cdots j_n} \in \RR$ represents the payoff for player $i$
if player $1$ chooses pure strategy $j_1$, player~$2$ chooses pure strategy $j_2$, etc.
These choices are to be understood probabilistically.
Think of the $n$ players as random variables. The $i$th random variable
has the state space $[d_i]$.
The players collectively choose a mixed strategy, which is a joint
probability distribution $P$.
More precisely, $P$ is a tensor of format
  $d_1 \times d_2 \times \cdots \times d_n$    whose entries
are positive reals that sum to $1$. The entry
$p_{j_1 j_2 \cdots j_n}$ is the probability that
player $1$ chooses pure strategy $j_1$, player~$2$ chooses pure strategy $j_2$, etc.

We write $V = \RR^{d_1 \times d_2 \times \cdots \times d_n}$ for the
real vector space of all tensors. Let $\PP(V)$ denote the corresponding
projective space, and let $\Delta$ be the open simplex of positive real points in $\PP(V)$.
The set of equilibria of our game is a subset of $\Delta$, and we are interested
in its Zariski closure in $\PP(V)$. The classical theory of Nash equilibria arises through the
{\em Segre variety} $\PP^{d_1-1} \times \PP^{d_2-1} \times \cdots \times \PP^{d_n-1}$
whose points are the tensors of rank one in $\PP(V)$. Namely,
the entries of a rank one tensor $P$ factor into the decision variables of \cite[Section 6.3]{CBMS} as follows:
$$ p_{j_1 j_2 \cdots j_n} \,= \, \pi^{(1)}_{j_1} \cdot \pi^{(2)}_{j_2} \cdot\,\ldots \, \cdot \pi^{(n)}_{j_n} .$$
Here $\pi^{(i)}_{j_i}$ represents the probability that player $i$ unilaterally selects pure strategy $j_i$.
In the study of totally mixed Nash equilibria, these quantities are positive reals, and they satisfy
\begin{equation}
\label{eq:sumtoone} \pi^{(i)}_{1} + \pi^{(i)}_{2} + \cdots + \pi^{(i)}_{d_i} \,   \, = \,\, 1 . 
\end{equation}
However, in what follows the $n$ players  are not independent. 
We view them as acting~together.
Their collective choice of a mixed strategy is thus a tensor $P$ which need not have rank~$1$.

Consider two players with binary choices, so
$n=d_1=d_2=2$.
Here $V = \RR^{2 \times 2}$ is a four-dimensional vector space,
and $P(V) = \PP^3$ is the projective space  whose points are $2 \times 2$ matrices up to scaling.
A game is specified by  two matrices $X^{(1)}$ and $ X^{(2)}$ in $V$.
The two players collectively choose a joint probability distribution $P$ for two
binary random variables.
Thus, they choose a positive matrix
 $P = \begin{small} \begin{bmatrix} p_{11} \! & \! p_{12} \\ p_{21} \! & \! p_{22} \end{bmatrix} \end{small}$
 whose entries sum to one, i.e.~$P \in \Delta$.
 
 \begin{example}[Bach or Stravinsky] 
 \label{ex:bach} A couple decides which of two  concerts to attend.
   The payoff matrices 
 indicate their preferences among composers, ${\rm Bach}$ = $1$ or ${\rm Stravinsky}$ = $2$:
 \begin{equation}
 \label{eq:bach}  X^{(1)} =   \begin{bmatrix} 3 & 0 \\ 0 & 2   \end{bmatrix}
 \quad {\rm and} \quad
 X^{(2)} =   \begin{bmatrix} 2 & 0 \\ 0 & 3   \end{bmatrix}.
\end{equation}
In texts on game theory,  this is called a bimatrix game.
The two payoff matrices are often written in a combined table.
For the game (\ref{eq:bach}), the combined table looks as follows:
 \begin{table}[H] \vspace{-0.1in}
\centering
    \setlength{\extrarowheight}{2pt}
    \begin{tabular}{cc|c|c|}
     & \multicolumn{1}{c}{} & \multicolumn{2}{c}{Player $2$}\\
      & \multicolumn{1}{c}{} & \multicolumn{1}{c}{Bach}  & \multicolumn{1}{c}{Stravinsky} \\\cline{3-4}
      \multirow{2}*{Player $1$} 
       & Bach & $(3,2)$ & $(0,0)$ \\\cline{3-4}
      & Stravinsky & $(0,0)$ & $(2,3)$ \\\cline{3-4}
    \end{tabular}
     \end{table} 
\noindent        Different entries are used in
     \cite[Section 3]{spohn}. We refer to that source for further examples.
          The four pure choices BB, BS, SB and SS label
the vertices of the tetrahedron in Figure \ref{fig: dependency equilibria}.
In the game, the couple selects a mixed strategy $P$, which is a point in that tetrahedron.
      \hfill $\diamond$
  \end{example}

Returning to our general set-up, we consider the
expected payoff for the $i$th player. By definition, this is the
dot product of the tensors  $X^{(i)}$ and $P$. In symbols, the 
{\em expected payoff}~is
\begin{equation}
\label{eq:expected}
\begin{small} P X^{(i)} \quad = \quad
 \sum_{j_1=1}^{d_1} \sum_{j_2=1}^{d_2} \cdots \sum_{j_n=1}^{d_n}
p_{j_1 j_2 \cdots j_n} X^{(i)}_{j_1 j_2 \cdots j_n}. \end{small}
\end{equation}
Player $i$ desires this quantity to be as large of possible. 
Aumann's correlated equilibria \cite{aumann2} are choices of $P$ where no player can raise their expected payoff by changing their strategy or breaking their part of the agreed joint probability distribution while assuming that the other players adhere to their own recommendations. The set of correlated equilibria
is a convex polytope inside the simplex $\Delta$. 
Its combinatorial structure is studied in~\cite{brandenburgportakal, NCH}.

In Spohn's theory \cite{spohn},  expected payoff is replaced by conditional expected payoff.
We focus on the payoff expected by player $i$ conditioned on her having fixed
pure strategy~$k \in [d_i]$. In precise mathematical terms,
the {\em conditional expected payoff}  is the ratio of two linear forms in the entries of $P$, each of which has $d_1 \cdots d_{i-1} d_{i+1} \cdots d_n$ summands.
The numerator is the subsum of (\ref{eq:expected}) given by all
summands with $j_i = k$. The denominator is the sum of all probabilities
$p_{j_1 j_2 \cdots j_n} $ where $j_i = k$. In algebraic statistics texts, this is
denoted $p_{+\cdots + k + \cdots+}$. 

Here is now the key definition due to Spohn \cite{homo, spohn}.
Consider the game given by the tuple
 $X = (X^{(1)}, X^{(2)},\ldots,X^{(n)})$.
 A tensor $P$ in $\Delta$ is
a {\em dependency equilibrium} for $X$  if the
conditional expected payoff of each player $i$
does not depend on her choice $k$. In symbols,
this definition says that the following equations hold, 
 for all $i \in [n]$ and all  $k,k' \in [d_i]$:
\begin{equation}\label{eq: dependency equilibria}
\begin{small}
\sum_{j_1 = 1}^{d_1} \cdots \widehat{\sum_{j_{i} = 1}^{d_{i}}} \cdots \sum_{j_n = 1}^{d_n} X^{(i)}_{j_1 \cdots  k  \cdots j_n} \frac{p_{j_1 \cdots  k  \cdots j_n}}{p_{+\cdots + k + \cdots+}} 
\quad = \quad  \sum_{j_1 = 1}^{d_1} \cdots \widehat{\sum_{j_{i} = 1}^{d_{i}}} \cdots \sum_{j_n = 1}^{d_n} X^{(i)}_{j_1 \cdots  k' \cdots j_n} \frac{p_{j_1 \cdots k'  \cdots j_n}}{p_{+\cdots + k' + \cdots+}}.
\end{small}
\end{equation}
Thus, dependency equilibria are defined by
certain equalities among ratios of linear forms. 

One issue with this definition is that 
$p_{+\cdots + k + \cdots+}$ might be zero.
Spohn  calls this a ``technical flaw'' \cite[Section 2]{spohn}, and he suggests a fix
by taking limits to the boundary of $\Delta$.
From the algebraic statistics perspective, this is not a flaw but a feature.
Many models are defined by constraints on strictly positive probabilities.
Possible extensions to the boundary are studied using
the technique of primary decomposition \cite[Section 4.3]{Sul}.
We here disregard boundary phenomena since $\Delta$ is the open simplex.
 This allows us to 
divide by $p_{+\cdots + k + \cdots+}$.

We have argued that clearing denominators in  (\ref{eq: dependency equilibria}) 
does not change the solution sets of interest.
 Thus we can write our equations as $2 \times 2$ determinants of
 linear forms in the entries of~$P$.
  We define a matrix $M_i = M_i(P)$ with $d_i$ rows and two columns
 as follows. The $k$th row of $M_i$ 
 consists of the denominator and the numerator of the ratio on 
 the left of  (\ref{eq: dependency equilibria}):
 \begin{equation}\label{eq: Spohn matrices}
M_i \,=\, M_i(P) \,\,:= \,\,\,\begin{bmatrix}
\vdots & \vdots \\
\,\,p_{+\cdots + k + \cdots+}\, &\,\, \sum_{j_1 = 1}^{d_1} \cdots \widehat{\sum_{j_{i} = 1}^{d_{i}}} \cdots \sum_{j_n = 1}^{d_n} X^{(i)}_{j_1 \cdots  k  \cdots j_n} p_{j_1 \cdots  k  \cdots j_n}\, \\
\vdots & \vdots 
\end{bmatrix} \! .
\end{equation}
Dependency equilibria for $X$ are the points $P \in \Delta$
for which each $M_i$ has rank one.
If $n$ is small then we simplify our notation by using letters $a,b,c$ for  the tensors
$X^{(1)},X^{(2)},X^{(3)}$.

\begin{example}[$2\times2$ games]\label{ex: dependency equilibria of 2x2 game}
Let $n=d_1 = d_2=2$ and $a_{ij},b_{ij} \in \RR$.
 The matrices in (\ref{eq: Spohn matrices}) are
$$ M_1 \,=\, \begin{bmatrix}
\,p_{11} + p_{12} &\, a_{11} p_{11} +a_{12} p_{12} \\
\,p_{21} + p_{22}  & \,a_{21} p_{21} +a_{22} p_{22} 
\end{bmatrix} \quad {\rm and}  \quad M_2 \,=\,  \begin{bmatrix}
\,p_{11} + p_{21} & \,b_{11} p_{11}  +b_{21} p_{21} \\
\, p_{12} + p_{22} & \,b_{12} p_{12} +b_{22} p_{22} \\
\end{bmatrix}. $$
The dependency equilibria are solutions in $\Delta$ to the equations
 ${\rm det}(M_1) = {\rm det}(M_2) = 0$.  
  \hfill $\diamond$
\end{example}

\begin{example}[$2\times2\times2$ games] \label{ex:twotwotwo}
Consider a game with three
players who have binary choices, i.e.~$n=3$ and $d_1=d_2=d_3=2$.
In \cite[Section 6.1]{CBMS} the players are called Adam, Bob and Carl,
and their payoff tables are
$X^{(1)} = (a_{ijk})$, $X^{(2)} = (b_{ijk})$ and $ X^{(3)} = (c_{ijk})$.
Dependency equilibria are  $2 \times 2 \times 2$
tensors $P = (p_{ijk})$ such that
these three $2 \times 2$ matrices have rank~$\leq 1$:
$$ M_1 \,\, = \,\,  \begin{bmatrix}
 p_{111} + p_{112} + p_{121} + p_{122} & \quad   a_{111} p_{111} + a_{112} p_{112} + a_{121} p_{121} + a_{122} p_{122} \\
 p_{211} + p_{212} + p_{221} + p_{222} & \quad  a_{211} p_{211} + a_{212} p_{212} + a_{221} p_{221} + a_{222} p_{222} \end{bmatrix},$$
 $$
M_2 \,\,= \,\, \begin{bmatrix} 
 p_{111} + p_{112} + p_{211} + p_{212} & \quad   b_{111} p_{111} + b_{112} p_{112} + b_{211} p_{211} + b_{212} p_{212} \\
 p_{121} + p_{122} + p_{221} + p_{222} &  \quad b_{121} p_{121} + b_{122} p_{122} + b_{221} p_{221} + b_{222} p_{222]} \end{bmatrix},
 $$
 $$
M_3 \,\, = \,\, \begin{bmatrix}
 p_{111} + p_{121} + p_{211} + p_{221} & \quad   c_{111} p_{111} + c_{121} p_{121} + c_{211} p_{211} + c_{221} p_{221} \\
p_{112} + p_{122} + p_{212} + p_{222} & \quad   c_{112} p_{112} + c_{122} p_{122} + c_{212} p_{212} + c_{222} p_{222}
\end{bmatrix}.
$$
If $X=(A,B,C)$  is generic then their determinants
are quadrics that intersect transversally.
This defines an irreducible variety $\mathcal{V}_X$ 
of dimension $4$ and degree $8$ in the tensor space $\PP^7$.
We now intersect $\mathcal{V}_X$ with the Segre variety
$\PP^1 \times \PP^1 \times \PP^1$ of rank one tensors in $\PP^7$. 
Setting $\alpha = \pi^{(1)}_1$, $\beta=\pi^{(2)}_1$, and $\gamma=\pi^{(3)}_1$, we use
the following parametrization for the Segre variety:
$$  \begin{small} \begin{matrix}
p_{111} = \alpha \beta \gamma ,\qquad & p_{112} = \alpha \beta (1-\gamma),
 \qquad &  p_{121} = \alpha(1-\beta)\gamma , \qquad &  p_{122}= \alpha (1-\beta) (1-\gamma),  \qquad \\
p_{211} = (1-\alpha)\beta \gamma , &\!\! p_{212} = (1-\alpha) \beta(1-\gamma) ,& \! \! 
p_{221} = (1-\alpha)(1-\beta) \gamma ,& \!\! p_{222} = (1-\alpha)(1-\beta)(1-\gamma).
\end{matrix} \end{small}
$$
After this substitution, and after removing extraneous factors, the three $2 \times 2$ determinants
are precisely the three bilinear polynomials exhibited in \cite[Corollary 6.3]{CBMS}. These  equations
have two solutions in $\PP(V)$, so there can be two distinct totally mixed Nash equilibria.
\hfill $\diamond$
\end{example}

For any game $X$, the set of dependency equilibria
contains the set of Nash equilibria. The latter is usually finite.
It is instructive to compare these objects for some examples from
  game theory text books. Some of these games are 
not presented in normal form, but in extensive form. It takes practise to
 derive the payoff tensors $X^{(i)}$ from extensive forms.

\begin{example}[Centipede Game] \label{ex:centipede}
This is a famous class of two-person games due to Robert Rosenthal \cite{Ros}. 
They are presented in extensive form, by graphs that looks
like a centipede. We discuss an instance with $d_1=3, d_2 = 2$.  
Our game is presented by the following graph:
\begin{center}
    \begin{tikzpicture}[font=\footnotesize,scale=1]
% Two node styles: solid and hollow
\tikzstyle{solid node}=[circle,draw,inner sep=1.2,fill=black];
\tikzstyle{hollow node}=[circle,draw,inner sep=1.2];
% The Tree
\node(0)[hollow node]{}
child[grow=down]{node[solid node]{}edge from parent node[left]{$d$}}
child[grow=right]{node(1)[solid node]{}
child[grow=down]{node[solid node]{}edge from parent node[left]{$d$}}
child[grow=right]{node(2)[solid node]{}
child[grow=down]{node[solid node]{}edge from parent node[left]{$d$}}
child[grow=right]{node(3)[solid node]{}
edge from parent node[above]{$r$}
}
edge from parent node[above]{$r$}
}
edge from parent node[above]{$r$}
};
% Movers
Page 23 of 28
\foreach \x in {0,2}
\node[above]at(\x){1};
\foreach \x in {1}
\node[above]at(\x){2};
% payoffs
\node[below]at(0-1){$(1,0)$};
\node[below]at(1-1){$(0,2)$};
\node[below]at(2-1){$(3,1)$};
\node[below]at(3){$(2,4)$};
\end{tikzpicture}
\end{center}
The two players chose sequentially between going right $r$ or down $d$.
A down choice ends the game. In our instance, the game also ends after three
right choices.
The payoffs for the four outcomes $d$, $rd$, $rrd$ or $rrrd$
are the labels of the leaves. This translates into a $3 \times 2$-game:
\begin{table}[H]
\centering
    \setlength{\extrarowheight}{2pt}
\begin{tabular}{cc|c|c|c|}
  & \multicolumn{1}{c}{} & \multicolumn{3}{c}{Player $2$} \\
  & \multicolumn{1}{c}{} & \multicolumn{1}{c}{$d$}  & \multicolumn{1}{c}{$r$}  \\\cline{3-4}
            & $d$ & $(1,0)$ & $(1,0)$ \\ \cline{3-4}
 Player $1$ 
 & $r+d$ & $(0,2)$ & $(3,1)$  \\\cline{3-4}
            & $r+r$ & $(0,2)$ & $(2,4)$  \\\cline{3-4}
\end{tabular}
  \end{table} 

This table gives the  $3 \times 2$ payoff matrices $X^{(1)}$ and $X^{(2)}$. 
Similarly to the Prisoner's Dilemma, the Nash equilibrium of the centipede game is not Pareto efficient. To compute the dependency equilibria, we consider four quadrics in six unknowns, namely the $2 \times 2$~minors of the matrices
$M_1$ and $M_2$. The ideal they generate is the intersection of two prime ideals:
$$ \!\begin{matrix}
 \langle \,
 p_{31}- p_{32}\,,\,\,
 p_{21}-2  p_{22}\,,\,\,
p_{11} p_{22}-4 p_{12} p_{22}-2 p_{22}^2+4 p_{11} p_{32}-2 p_{12}p_{32}+3 p_{22} p_{32}+2 p_{32}^2
\, \rangle \qquad \cap  \\
\! \langle\,
p_{11} + p_{12}\,,\,\,
3 p_ {22}  p_{31} - 2  p_{21}  p_{32} +  p_{22}  p_{32}\,, \\
6 p_{12} p_{21} + 3 p_{12} p_{22}  +  3 p_{21} p_{22} + 6 p_{12}p_{31}
+ 12 p_{12} p_{32} - 4 p_{21} p_{32} - p_{22} p_{32}  - 6 p_{31} p_{32}\, \rangle.
 \end{matrix} 
$$
The second  component,
a singular quartic surface in a hyperplane in $\PP^5$, is disjoint from $\Delta$.
The first component is a hyperboloid in a $3$-space $\PP^3$ which intersects
the open simplex $\Delta$. 
That intersection is the set of dependency equilibria. There are no Nash equilbria in $\Delta$.
\hfill $\diamond$
\end{example}

\section{The Spohn variety}
\label{sec3}

In this section we work in the complex projective space $\PP(V)$ 
of $d_1 \times \cdots \times d_n$ tensors.
We write $\mathcal{V}_X$ for the subvariety of $\PP(V)$ that is
given by requiring $M_1,\ldots,M_n$ to have rank one.
We call $\mathcal{V}_X$ the {\em Spohn variety} of the game $X$. 
Thus $\mathcal{V}_X$ is defined by 
$\sum_{i=1}^n\binom{d_i}{2} $ quadratic forms in $\prod_{i=1}^n d_i$ unknowns
$p_{j_1\cdots j_n}$, namely the
$2 \times 2$ minors of the $n$ matrices $M_i$ in (\ref{eq: Spohn matrices}).

We already saw several examples in the previous section.
For three-person games with binary choices   (Example \ref{ex:twotwotwo}),
the Spohn variety
$\mathcal{V}_X$ is a fourfold in $\PP^7$.
For the centipede game (Example \ref{ex:centipede}), the Spohn variety
$\mathcal{V}_X$ is a surface in $\PP^8$. We next consider a $2 \times 2$ game.

 \begin{example}[Bach or Stravinsky]
For the game in Example \ref{ex:bach}, we consider the matrices
$$ M_1 \,=\, \begin{bmatrix}
\,p_{11} + p_{12} &\, 3 p_{11} \\
\,p_{21} + p_{22}  & \,2 p_{22} 
\end{bmatrix} \quad {\rm and}  \quad M_2 \,=\,  \begin{bmatrix}
\,p_{11} + p_{21} & \, 2 p_{11} \\
\, p_{12} + p_{22} & \, 3 p_{22} 
\end{bmatrix}.
$$
The ideal generated by ${\rm det}(M_1)$ and ${\rm det}(M_2)$ 
is the intersection of three prime ideals:
$$
\langle p_{11},p_{22} \rangle \, \cap \,
\langle 2 p_{12} + 3p_{21},  3p_{11}p_{21} + p_{11}p_{22} + 3p_{21}p_{22}  \rangle \,\cap\,
\langle 2p_{12} - 3p_{21} - p_{22}, p_{11} - p_{22} \rangle. $$
This shows that the Spohn variety $\mathcal{V}_X$ is
the reduced union of three curves, two lines and~one conic,
shown in Figure \ref{fig: dependency equilibria}.
Only one component, namely a line, intersects the open tetrahedron $\Delta$.
This game has two pure Nash equlibria $(1,0,0,0), (0,0,0,1)$ 
and one totally mixed Nash equilibrium $(\frac{6}{25},\frac{9}{25},\frac{4}{25},\frac{6}{25})$.
The latter is the positive point of rank one on the curve $\mathcal{V}_X$.
\hfill $\diamond$ \end{example}

\begin{figure}[H]
\centering \vspace{-0.3cm}
\includegraphics[width=8.8cm]{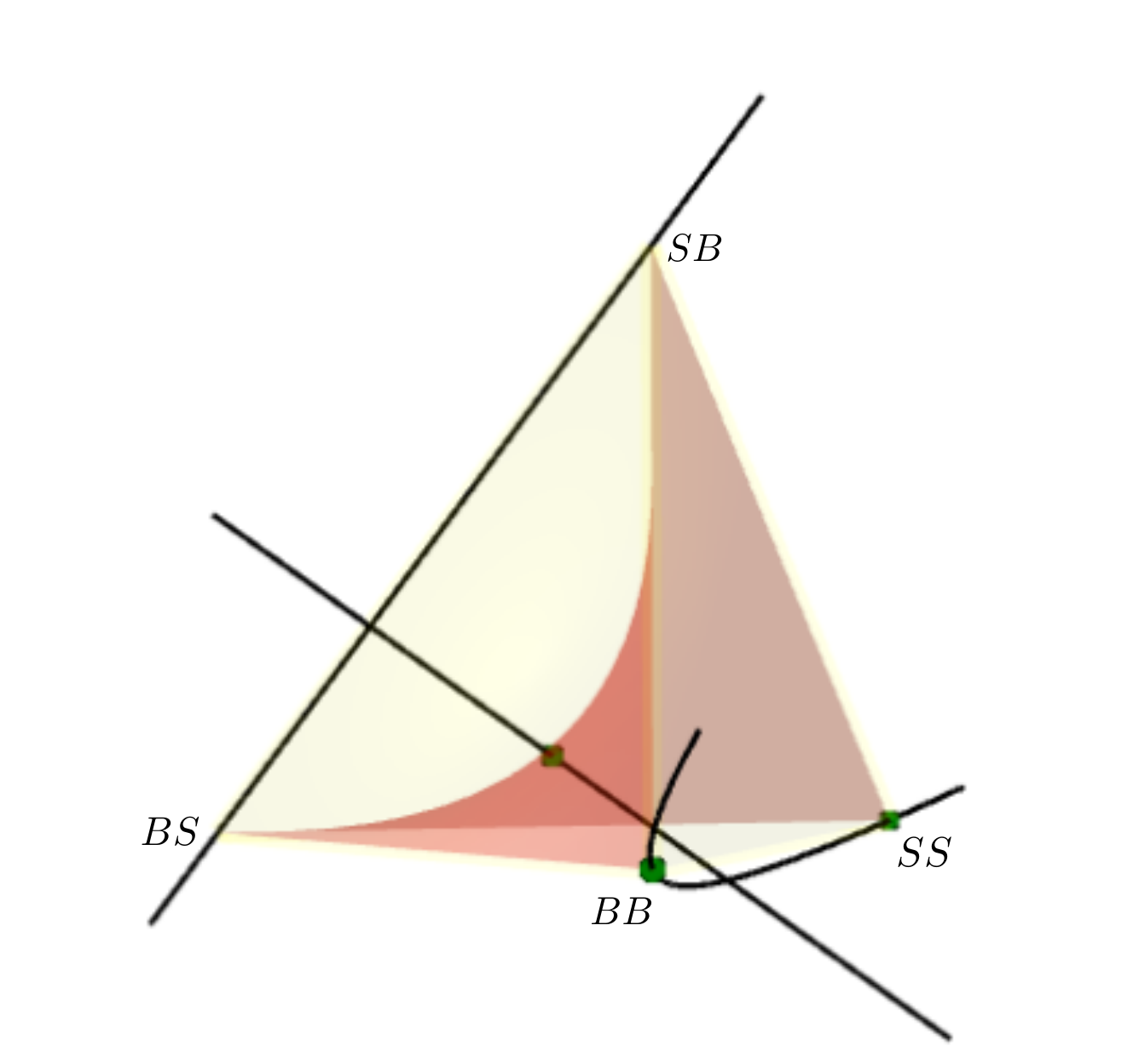}
\vspace{-0.2cm}
\caption{The Spohn variety is a reducible curve of degree four in $\PP^3$.
It has three components but only one passes through the tetrahedron.
The figure also shows the Segre surface in the tetrahedron. 
The curve and the surface meet in one point, namely
the totally mixed Nash equilibrium.
    \label{fig: dependency equilibria}}
\end{figure} 

The curve in Figure \ref{fig: dependency equilibria} has multiple components
because the payoff matrices in (\ref{eq:bach}) are very special.
If we perturb the matrix entries, then 
 the resulting curve $\mathcal{V}_X$ will be smooth and irreducible in $\PP^3$.
 As we shall see, the analogous result holds for games of arbitrary size.
 
 We now present our first result in this section.
It summarizes the essential geometric features of Spohn varieties, and it shows
how these varieties are related to  the Nash equilibria.

\begin{theorem} \label{thm:spohn}
If the payoff tables $X$ are generic then the Spohn variety $\mathcal{V}_X$ 
is irreducible of codimension $d_1 + d_2 + \cdots + d_n - n$ and degree 
$d_1 d_2 \ldots d_n$. The intersection of
$\mathcal{V}_X$ with the Segre variety in the  open simplex $\Delta$ is precisely
the set of totally mixed Nash equilibria for~$X$.
\end{theorem}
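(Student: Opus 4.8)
The plan is to realize $\mathcal{V}_X$ as the image of an incidence variety that carries the proportionality constants of the matrices $M_i$ as extra coordinates, and to analyze it through its two projections. Concretely, set $N = d_1 d_2 \cdots d_n$ and work in $\PP(V) \times (\PP^1)^n$, where the $i$th factor $\PP^1$ has coordinates $[s_i : t_i]$. Requiring $M_i$ to have rank one is the same as asking that its rows be proportional to a common vector $[s_i : t_i]$; writing this out gives, for each $i \in [n]$ and each $k \in [d_i]$, the bilinear equation $t_i \, p_{+\cdots+k+\cdots+} = s_i \sum_{j_1,\ldots,\widehat{j_i},\ldots,j_n} X^{(i)}_{j_1\cdots k\cdots j_n}\, p_{j_1\cdots k\cdots j_n}$. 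Let $\widetilde{\mathcal{V}}_X \subseteq \PP(V)\times(\PP^1)^n$ be their common zero locus, with projections $\rho$ to $(\PP^1)^n$ and $\pi$ to $\PP(V)$. On the open locus where every $M_i$ has rank exactly one, the direction $[s_i:t_i]$ is uniquely determined, so $\pi$ is birational onto $\mathcal{V}_X$; this is the map that will transfer irreducibility, dimension and degree from $\widetilde{\mathcal{V}}_X$ to $\mathcal{V}_X$.

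For the codimension and irreducibility I would study $\rho$. For fixed $[s_i:t_i]$ the $\sum_i d_i$ defining equations become linear in $P$, and for generic $X$ they are linearly independent on a dense open set $U \subseteq (\PP^1)^n$: within one player $i$ the $d_i$ forms have pairwise disjoint supports (indexed by $k$), and genericity handles independence across players, which is possible since $\sum_i d_i \le N$ when all $d_i \ge 2$. Hence over $U$ the fibers of $\rho$ are linear subspaces $\PP^{\,N-1-\sum d_i}$, so $\rho^{-1}(U)$ is a projective bundle over the irreducible base $U$ and is therefore irreducible of dimension $n + N - 1 - \sum_i d_i$. A Bertini-type genericity argument on $X$ is then used to show that the degeneracy locus $(\PP^1)^n \setminus U$ contributes nothing of larger dimension, so that $\widetilde{\mathcal{V}}_X$ is irreducible; pushing through the birational $\pi$ yields that $\mathcal{V}_X$ is irreducible of codimension $(N-1) - (n + N - 1 - \sum_i d_i) = \sum_i d_i - n$.

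The degree follows from an intersection computation once the codimension is known. The variety $\widetilde{\mathcal{V}}_X$ is cut out by exactly $\sum_i d_i$ equations and has codimension $\sum_i d_i$ in $\PP(V)\times(\PP^1)^n$, so for generic $X$ it is a complete intersection. In the cohomology ring $\ZZ[h,g_1,\ldots,g_n]/(h^{N}, g_1^2,\ldots,g_n^2)$, where $h$ and $g_i$ are the hyperplane classes of $\PP(V)$ and of the $i$th $\PP^1$, the $i$th player's $d_i$ equations each have class $h+g_i$, so $[\widetilde{\mathcal{V}}_X] = \prod_{i=1}^n (h+g_i)^{d_i}$. Since $g_i^2 = 0$ we have $(h+g_i)^{d_i} \equiv h^{d_i} + d_i\, h^{d_i-1} g_i$, and pushing forward along $\pi$ extracts the coefficient of $g_1 g_2 \cdots g_n$, giving $\pi_*[\widetilde{\mathcal{V}}_X] = (d_1 d_2 \cdots d_n)\, h^{\sum_i d_i - n}$. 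As $\pi$ is birational, the degree of $\mathcal{V}_X$ is the leading coefficient $d_1 d_2 \cdots d_n$.

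Finally, for the intersection with the Segre variety inside $\Delta$, I would compute the matrices $M_i$ directly on a rank one tensor $p_{j_1\cdots j_n} = \pi^{(1)}_{j_1}\cdots \pi^{(n)}_{j_n}$. Using $\sum_{j}\pi^{(m)}_{j} = 1$, the denominator in the $k$th row of $M_i$ collapses to $\pi^{(i)}_k$, and the numerator to $\pi^{(i)}_k \cdot u_i(k)$, where $u_i(k) = \sum_{j_1,\ldots,\widehat{j_i},\ldots,j_n} X^{(i)}_{j_1\cdots k\cdots j_n}\prod_{m\ne i}\pi^{(m)}_{j_m}$ is the ordinary expected payoff to player $i$ of the pure strategy $k$ against the others' mixed strategies. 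Thus the conditional expected payoff of row $k$ equals $u_i(k)$, and $M_i$ has rank $\le 1$ exactly when $u_i(k)$ is independent of $k$, i.e.\ when player $i$ is indifferent among all of her pure strategies. Over the open simplex every $\pi^{(i)}_k$ is positive, so this is precisely the classical indifference characterization of a totally mixed Nash equilibrium; hence a positive rank one tensor lies on $\mathcal{V}_X$ if and only if it is a totally mixed Nash equilibrium, which is the asserted identification. The main obstacle throughout is the genericity input in the second paragraph: proving that the linear forms attached to $\rho$ remain independent on a dense open set and that the degeneracy locus creates no spurious components, so that $\widetilde{\mathcal{V}}_X$ indeed has the expected dimension and is irreducible.
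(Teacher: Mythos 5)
Your last paragraph (the Nash statement) is correct and is essentially the paper's own argument: on a positive rank-one tensor the $k$th row of $M_i$ collapses to $\pi^{(i)}_k\,(1,\,u_i(k))$, so rank one means indifference among pure strategies. Your degree computation via the multigraded class $\prod_i(h+g_i)^{d_i}$ and pushforward is also a legitimate alternative to the paper's Bézout count. The gap is in the core of the first part, the irreducibility and dimension of $\widetilde{\mathcal{V}}_X$. First, your fibration argument fails outright in the case $n=d_1=d_2=2$, which the theorem does cover (codimension $2$, degree $4$): there $\prod_i d_i=\sum_i d_i$, so over your dense open set $U$ the kernel of $K_X(z)$ is zero and the fiber of $\rho$ is $\PP^{-1}=\emptyset$; the whole incidence variety lies over the degeneracy locus $\{\det K_X(z)=0\}$, and the ``projective bundle over $U$'' has no content. (This is exactly the case $D=0$ that the paper's Theorem \ref{thm:rational} has to treat separately, where $\mathcal{V}_X$ is an elliptic curve and in particular not rational.) Second, even when $D=\prod_i d_i-\sum_i d_i\geq 1$, your argument needs the quantitative statement that for every $r\geq 1$ the locus $Z_r\subset(\PP^1)^n$ where the rank of $K_X(z)$ drops by $r$ has codimension strictly greater than $r$: otherwise $\rho^{-1}(Z_r)$ can have dimension at least $n+N-1-\sum_i d_i$, producing extra components, and then both irreducibility and the complete-intersection property underlying your cycle-class computation collapse. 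You flag this yourself as ``the main obstacle'' and appeal to an unspecified Bertini-type argument, but this is precisely the hard point: $K_X(z)$ is very far from a generic matrix of forms (it has a rigid zero pattern, and the $i$th block involves only $z_i$), so generic determinantal codimension counts do not apply off the shelf. As written, the proof is therefore incomplete.

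The paper sidesteps both issues by working directly in $\PP(V)$ with the matrices $M_i$: for generic $X^{(i)}$ every generalized column of $M_i$ consists of linearly independent linear forms, so by Eisenbud's theorem on such (1-generic) matrices the ideal of $2\times 2$ minors of $M_i$ is prime of codimension $d_i-1$, and by Bruns--Vetter its degree is $d_i$; since the entries of $X^{(i)}$ occur only in $M_i$, the $n$ determinantal varieties intersect transversally, and Bézout for dimensionally transverse intersections gives codimension $\sum_i d_i-n$ and degree $\prod_i d_i$ uniformly in all formats, including $2\times 2$. If you want to keep your route, you would need (i) a separate argument for $D=0$ (e.g.\ that two generic Spohn quadrics in $\PP^3$ meet in an irreducible quartic curve), and (ii) an actual proof of the codimension bound on the rank-drop loci of $K_X$ for generic $X$, plus generic reducedness of the complete intersection so that the pushforward computes the degree of the reduced image.
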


\begin{proof}
Consider a generalized column of 
the $d_i \times 2$ matrix $M_i$, i.e.\ a  linear combination of  the columns of $M_i$ with coefficients $\lambda_1, \lambda_2 \in \RR$ that are not both zero.  
Since the payoff table $X^{(i)}$ is generic, every generalized column of $M_i$ consists of linearly independent linear forms. 
We know from \cite[Theorem 6.4]{Eisenbud2004TheGO} that the ideal generated by 
the $2 \times 2$ minors of $M_i$ is prime of codimension $d_i-1$. Moreover, by \cite[Proposition 2.15]{brunsvetter},
the degree of this linear determinantal variety is $\binom{2 + d_i-1-1}{d_i -1} =d_i$. Since 
the tensor $X^{(i)}$ is generic and its entries occur only in $M_i$, the intersection of the $n$ varieties is transversal. 
Now, \cite[Theorem 1.24]{shafarevich} and B\'ezout's Theorem for dimensionally transverse intersections yield the first assertion. 

The second assertion says that the totally mixed Nash equilibria are the dependency~equilibria of rank one.
Set $p_{j_1 \ldots j_n}=\pi^{(1)}_{j_1} \cdots \pi^{(n)}_{j_n}$
with $\pi^{(i)}_k >0$ for $k \in [d_i]$ and $i \in [n]$. Suppose (\ref{eq:sumtoone}) holds. The dependency equilibria of rank one
are defined by the $2 \times 2$ minors of the matrix
\[\begin{bmatrix}
\,\,\,1 \quad & \sum_{j_1 = 1}^{d_1} \cdots \widehat{\sum_{j_{i} = 1}^{d_{i}}} \cdots \sum_{j_n = 1}^{d_n} X^{(i)}_{j_1 \cdots 1 \cdots j_n} \pi^{(1)}_{j_1} \cdots \pi^{(i-1)}_{j_{i-1}}\pi^{(i+1)}_{j_{i+1}}\cdots \pi^{(n)}_{j_{n}}   
\smallskip \\
\,\,\,\vdots \quad & \vdots \\
\,\,\,1 \quad & \sum_{j_1 = 1}^{d_1} \cdots \widehat{\sum_{j_{i} = 1}^{d_{i}}} \cdots \sum_{j_n = 1}^{d_n} X^{(i)}_{j_1 \cdots d_i \cdots j_n} \pi^{(1)}_{j_1} \cdots \pi^{(i-1)}_{j_{i-1}}\pi^{(i+1)}_{j_{i+1}}\cdots \pi^{(n)}_{j_{n}} 
\end{bmatrix}\!.\]
We subtract the first row from  the $k$th row for all $k \in \{2, \ldots, d_i\}$. 
The $2 \times 2$ minors of the resulting matrix are
the  pairwise differences of the entries in the second column.
These differences are precisely
the $d_i -1$ multilinear equations exhibited in \cite[Theorem 6.6]{CBMS}.
\end{proof}

The Spohn variety $\mathcal{V}_X$ is a high-dimensional projective
variety associated with a game~$X$. Each point $P$ on $\mathcal{V}_X$ is
a tensor. We say that $P$ is a {\em Nash point} if that tensor has rank one.
The positive Nash points in $\mathcal{V}_X \cap \Delta$ are the
totally mixed Nash equilibria. Their number is given by the 
formula in \cite[Section 6.4]{CBMS}, namely it  expressed as the mixed volume of 
certain products of simplices. That mixed volume is zero when the
tensor format is too unbalanced.

\begin{remark}
A generic game $X$ has no Nash points unless
\begin{equation}
\label{eq:balanced} 
\quad d_i  \,\,\leq \, \,d_1 + \cdots + d_{i-1} + d_{i+1} + \cdots + d_n -n+2 \qquad
\hbox{for}\,\,\,\, i=1,2,\ldots,n.
\end{equation}
Experts on tensor geometry recognize these inequalities from
a result by Gel'fand, Kapranov and Zelevinsky
on hyperdeterminants~\cite[Theorem 14.I.1.3]{GKZ}.
Namely, the existence of Nash points for a given tensor format is equivalent to 
the hyperdeterminant being a hypersurface.
In particular, two-person games have Nash points if and only if the matrix is square 
($d_1=d_2$).
\end{remark}

We continue to assume that the payoff tables are generic. Then the following result holds.

\begin{theorem} \label{thm:rational}
If $n= d_1 = d_2 = 2$ then the Spohn variety 
$\mathcal{V}_X$ is  an elliptic curve. In all other cases, the Spohn variety
$\mathcal{V}_X$ is rational, represented by a map onto  $(\PP^1)^n$ with linear fibers.
\end{theorem}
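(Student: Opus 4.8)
The plan is to split into two regimes: the $2\times 2$ case is handled by classical curve theory, while all other formats are shown rational by constructing an explicit linear fibration over $(\PP^1)^n$. For $n=d_1=d_2=2$, Theorem~\ref{thm:spohn} already gives that $\mathcal{V}_X$ is an irreducible curve of degree $4$ in $\PP^3=\PP(V)$, cut out transversally by the two quadrics $\det(M_1)$ and $\det(M_2)$. Transversality for generic $X$ makes this complete intersection smooth, so I would compute its genus by adjunction: a smooth complete intersection of type $(2,2)$ in $\PP^3$ has trivial dualizing sheaf $\mathcal{O}_C(2+2-4)=\mathcal{O}_C$, hence genus $1$. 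Being a smooth genus-one curve over $\CC$ with a rational point, $\mathcal{V}_X$ is an elliptic curve; the explicit $j$-invariant is the subject of Section~\ref{sec4}.

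For every other format the idea is that the rank-one conditions on $M_1,\dots,M_n$ decouple once one records, for each player $i$, the proportionality class $t_i=[\lambda_i:\mu_i]\in\PP^1$ of the two columns of $M_i$; geometrically $t_i$ is the common conditional expected payoff of player $i$. This defines a rational map $\phi\colon\mathcal{V}_X\dashrightarrow(\PP^1)^n$, $P\mapsto(t_1,\dots,t_n)$, which is the point of departure. First I would observe that, for fixed $t$, the requirement that each $M_i(P)$ have column ratio $t_i$ is the vanishing of the $\sum_i d_i$ linear forms $f_{i,k}=\mu_i\,p_{+\cdots+k+\cdots+}-\lambda_i\sum_{j_i=k}X^{(i)}_{j}\,p_{j}$ in the tensor entries. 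These forms already force $\operatorname{rank}M_i\le 1$, so the fiber $\phi^{-1}(t)$ is exactly the linear space $L_t\subseteq\PP(V)$ they define, and $\mathcal{V}_X=\bigcup_t L_t$ has honestly linear fibers over its image. Passing to the incidence variety $\mathcal{I}=\{(P,t):P\in L_t\}$, the first projection $\mathcal{I}\to\mathcal{V}_X$ is birational, because a generic $P\in\mathcal{V}_X$ has every $M_i$ of rank exactly one, which pins down $t$ uniquely.

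The crux is to show the second projection $\pi\colon\mathcal{I}\to(\PP^1)^n$ is dominant with linear fibers of the expected dimension, which is precisely where the $2\times 2$ exclusion enters numerically. Since $L_t$ is always cut by at most $\sum_i d_i$ linear equations, $\dim L_t\ge \prod_i d_i-1-\sum_i d_i$, and this lower bound is nonnegative exactly when the format is not $2\times 2$. To obtain equality on a dense set, equivalently to see the linear system attain full rank $\sum_i d_i$, I would evaluate at the single point $t_0=(\infty,\dots,\infty)$, where $f_{i,k}$ becomes the payoff-weighted marginal $\sum_{j_i=k}X^{(i)}_j\,p_{j}$. The coefficient matrix there is a matrix of independent indeterminates supported on a fixed zero pattern, so its generic rank equals the maximal matching in the bipartite graph joining each pair $(i,k)$ to the multi-indices $j$ with $j_i=k$. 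A Hall-type estimate, using $\prod_i d_i-\prod_i(d_i-m_i)\ge\sum_i m_i$ for all $0\le m_i\le d_i$, yields a matching saturating all $\sum_i d_i$ columns, so the system has full rank at $t_0$. By lower semicontinuity of rank this persists on a dense open $U\subseteq(\PP^1)^n$; since $\dim L_t=\prod_i d_i-1-\sum_i d_i\ge 0$ there, each such $t$ lies in the image, $\pi$ is dominant, and the fibers $L_t$ are linear of the predicted dimension. Over $U$ the map $\pi$ is the projectivization of the kernel bundle of the linear system, hence Zariski-locally a product $U\times\PP^m$; as $(\PP^1)^n$ is rational, so are $\mathcal{I}$ and therefore $\mathcal{V}_X$.

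I expect the genuine obstacle to be the full-rank statement at $t_0$: everything downstream is bookkeeping with Theorem~\ref{thm:spohn} and standard facts about projective bundles, but the clean dichotomy between the rational cases and the elliptic $2\times 2$ case rests on the combinatorial matching estimate above, together with the observation that the excluded value $\prod_i d_i-1-\sum_i d_i=-1$ is exactly what forces $L_t$ to be empty, and hence $\pi$ to fail dominance, when $n=d_1=d_2=2$.
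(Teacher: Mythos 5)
Your argument is correct, and its core is the same as the paper's: your map $\phi$ recording the column-proportionality classes of the $M_i$ is exactly the algebraic payoff map (\ref{eq:algpayoff}), your stacked linear forms $f_{i,k}$ are the rows of the Konstanz matrix $K_X(z)$, and in both proofs the dichotomy is governed by $D=\prod_i d_i-\sum_i d_i$ being zero (only for $2\times 2$) versus positive. Two local differences are worth recording. First, where the paper simply asserts that $K_X(z)$ has linearly independent rows for generic $z$, you prove it: specializing to $t_0=(\infty,\dots,\infty)$ makes the coefficient matrix one of independent indeterminates $X^{(i)}_{j}$ on a fixed support, so its generic rank is the maximum matching of the support graph, and your Hall estimate $\prod_i d_i-\prod_i(d_i-m_i)\ge\sum_i m_i$ is indeed valid for all formats; this supplies a detail the paper leaves unproved. (Note that the Hall bound also holds for $2\times 2$, with equality at $m_i=d_i$, so the dichotomy truly comes from $D=0$ forcing empty kernels, as your closing sentence correctly says, not from a failure of the matching argument.) Second, for $n=d_1=d_2=2$ you derive ellipticity by adjunction on the $(2,2)$ complete intersection in $\PP^3$, whereas the paper's proof identifies $\mathcal{V}_X$ birationally with the bidegree-$(2,2)$ determinantal curve $\det K_X(z)=0$ in $\PP^1\times\PP^1$; both are classical, and both hinge on smoothness for generic $X$, which your appeal to ``transversality'' does not quite furnish (proper intersection is weaker than transversality of tangent spaces at every point). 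The paper settles this point only in Section \ref{sec4}, via the nonvanishing of the discriminant $\mathcal{D}(a,b)$, so you should either invoke that computation or exhibit a single smooth instance and use openness of smoothness in the family of games. With that one caveat, your proof is complete and, through the incidence-variety formulation, slightly more careful than the paper's about why $\mathcal{I}\to\mathcal{V}_X$ is birational.
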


\begin{proof}
We shall provide a 
parametrization of $\mathcal{V}_X$. Along the way, we shall see why
the case $n=d_1=d_2=2$ is special.
The entries of these $n$ matrices $M_i$ in (\ref{eq: Spohn matrices})
are linear forms in the entries $p_{j_1\cdots j_n}$ of
the tensor $P$. Their coefficients
depend linearly on the entries of~$X$.

Consider the affine line whose coordinate $x_i = P X^{(i)}$
is the expected payoff (\ref{eq:expected}) for player~$i$.
We embed this into a projective line $\PP^1$ by setting
 $z_i = (x_i:-1)$. We call  $(\PP^1)^n$ 
the {\em algebraic payoff space}. Its homogeneous coordinates are
  $z = (z_1,z_2,\ldots,z_n)$.
The {\em algebraic payoff map} is the following rational map from 
the Spohn variety to the algebraic payoff space:
\begin{equation}
\label{eq:algpayoff} \pi_X : \mathcal{V}_X\, \dashrightarrow \,(\PP^1)^n \,,\,\,
P\, \mapsto\,  \bigl(\,
{\rm ker}(M_1(P))\,, \,
{\rm ker}(M_2(P))\,, \,\ldots, \,{\rm ker}(M_n(P)) \,\bigr). 
\end{equation}

The name ``payoff map'' is justified as follows.
Suppose that $P$ is a dependency equilibrium, so $P$ is a point in the set
$\mathcal{V}_X  \cap \Delta$. 
The expected payoff $x_i$ for the $i$th player satisfies 
\begin{equation}
\label{eq:Mx}
M_i(P) \cdot \begin{bmatrix} \phantom{-} x_i\, \\ -1 \,\end{bmatrix} \,= \, 0 \quad {\rm for} \quad
i=1,2,\ldots,n.
\end{equation}
To see this, augment the rank one matrix $M_i(P)$ by its row of column sums, like in
(\ref{eq:whoserankisone}).
Equation (\ref{eq:Mx}) implies 
$ \pi_X(P)  = \bigl( (x_1\!:\!-1),\ldots,(x_n\!:\!-1) \bigr)$. We now write (\ref{eq:Mx}) on
$(\PP^1)^n$ as~follows:
\begin{equation}
\label{eq:MzUP}
 M_i(P) \cdot z_i^T \, = \, K_{X,i}(z_i) \cdot  P , 
 \end{equation}
where the tensor $P$ is vectorized as column.
The matrix $K_{X,i}(z_i)$ has $d_i$ rows and $d_1d_2\cdots d_n$ columns. Its
entries are binary forms in $z_i$ whose coefficients depend on the entries of $X^{(i)}$.

\begin{definition}
The {\em Konstanz matrix} $K_X(z)$ of the game $X$ is a matrix with
$\sum_{i=1}^n d_i$  rows and $d_1 d_2 \cdots d_n$ columns.
It is obtained by stacking the matrices $K_{X,1}(z_1), \ldots, K_{X,n}(z_n)$
on top of each other.
When working on the affine chart $z_i = (x_i:-1)$, we write
$K_X(x)$.
\end{definition}

The Konstanz matrix $K_X(z)$ has linearly independent  rows
when $z$ is generic.
Therefore, its kernel is a vector space of dimension
 $D = \prod_{i=1}^n d_i - \sum_{i=1}^n d_i$.
 We regard  ${\rm ker}(K_X(z))$ as a
linear  subspace of dimension $D-1$ in the projective space $\PP(V)$.
 Our construction implies that the Spohn variety 
 is the union of these linear spaces for $z \in (\PP^1)^n$:
\begin{equation} 
\label{eq:spohnpara}
\mathcal{V}_X \quad =  \quad \bigl\{ \,P \in \PP(V) \,:\,
K_X(z) \cdot P = 0 \, \,\,\hbox{for some} \,z \in (\PP^1)^n \,\bigr\}. 
\end{equation} 

At this point we must distinguish the cases $D \geq 1$ and $D = 0$. First, let
$D\geq 1$. Then the map  $\pi_X$ is dominant, and its generic fiber
is a linear space $\PP^{D-1} $. This map furnishes
an explicit birational isomorphism
between the Spohn variety  $\mathcal{V}_X$ and $\PP^{D-1} \times (\PP^1)^n$.
The representation (\ref{eq:spohnpara}) gives the inverse, hence the
desired rational parametrization of  
$\mathcal{V}_X$. This confirms the dimension formula 
in Theorem \ref{thm:spohn}, which is here rewritten as
${\rm dim}(\mathcal{V}_X) = D-1+n$.

Finally, let $D=0$.
This implies $n=d_1=d_2 = 2$, so the Konstanz matrix has format 
$4 \times 4$. It is shown in  (\ref{eq:Konstanz22}).
The determinant of $K_X(z)$ is a curve of degree $(2,2)$
in $\PP^1 \times \PP^1$, so it is an elliptic curve.
The map $\pi_X$ gives a birational isomorphism
from $\mathcal{V}_X$ onto this curve.
This elliptic curve is studied in detail in Section \ref{sec4},
and we will revisit it in Example \ref{ex:typically}.
\end{proof}

The case $D=1$ is also of special interest, because here
$\pi_X$ is a birational isomorphism.

\begin{example}[Del Pezzo surfaces of degree two] \label{ex:delpezzo}
Let $n=2, d_1=3,d_2 = 2$. Up to relabelling,
this is the only case satisfying $D=1$.
The Konstanz matrix equals
\begin{equation}
\label{eq:Konstanz32} K_X(x) \quad = \quad
\begin{bmatrix}
\,x_1-a_{11} & x_1-a_{12} & 0 & 0 & 0 & 0 \\
0 & 0 & x_1-a_{21} & x_1-a_{22} & 0 & 0  \\
0 & 0 & 0 & 0 & x_1-a_{31} & x_1-a_{32} \,\\
\, x_2 - b_{11} & 0 & x_2 - b_{21} & 0 & x_2 - b_{31} & 0 \\
0 & x_2 - b_{12} & 0 & x_2 - b_{22} & 0 & x_2 - b_{32} \,
\end{bmatrix}.
\end{equation}
Here $(x_1,x_2)$ are coordinates on an affine chart $\CC^2$ of $\PP^1 \times \PP^1$.
The rank of (\ref{eq:Konstanz32}) drops from $5$ to $4$ at precisely six
points in $\PP^1 \times \PP^1$. Five of these lie in $\CC^2$.  We obtain a rational map
$$ 
\PP^1 \times \PP^1 \dashrightarrow \PP^5 \, , \,
(x_1,x_2) \mapsto {\rm ker}(K_X(x)).
$$
This blows up six points, and its image is the Spohn surface $\mathcal{V}_X$. 
The inverse map is  $\pi_X$.
We conclude that $ \mathcal{V}_X$ is the blow-up of $\PP^1 \times \PP^1$
at six general points. When seen through the lens of
algebraic geometry \cite[Example~1.9]{RanRoc},
this is a del Pezzo surface of degree two.
\hfill $\diamond$
\end{example}

Konstanz matrices for three other tensor formats are shown in
Examples  \ref{ex:typically},
\ref{ex:33Konstanz} and~\ref{ex:222Konstanz}.

\section{Elliptic Curves}
\label{sec4}

In this section we take a closer look at $2 \times 2 $  games, with
 payoff matrices 
 $X^{(1)} = (a_{ij})$ and  $X^{(2)} = (b_{ij})$. The Spohn variety $\mathcal{V}_X$
is the elliptic curve in $\PP^3$ defined by the two quadrics
$$ \begin{matrix}
\! f_1 \,=\, {\rm det}(M_1)\,=
       (a_{21} - a_{11}) p_{11} p_{21} + (a_{22} - a_{11}) p_{11} p_{22}
    + (a_{21} - a_{12}) p_{12} p_{21} + (a_{22} - a_{12}) p_{12} p_{22},\,
\\
 f_2 \,\,=\,{\rm det}(M_2) \, = 
 (b_{12} - b_{11}) p_{11} p_{12} \,+ (b_{22} - b_{11}) p_{11} p_{22} 
 \,+\, (b_{12} - b_{21}) p_{12} p_{21} +\, (b_{22} - b_{21}) p_{21} p_{22}.\,
\end{matrix}
$$
This curve passes through the coordinate points
$E_{11},E_{12},E_{21},E_{22}$ in $\PP^3$. It is smooth
and irreducible when $a_{ij}$ and $b_{ij}$ are generic.
A  planar model of this elliptic curve is obtained by eliminating $p_{22}$
from $f_1$ and $f_2$. Setting $\,p_{11} = x,\, p_{12} = y,\, p_{21} = z$, 
we find the ternary~cubic 
\begin{equation}
\label{eq:eliminant}  \begin{matrix}
\quad (a_{11}-a_{22})(b_{11}-b_{12}) \,x^2y \,+\, (a_{11}-a_{21})(b_{22}-b_{11})\,x^2z
     \,+\,(a_{12}-a_{22})(b_{11}-b_{12}) \,x y^2 \\ +\, (a_{11}-a_{21}) (b_{22}-b_{21}) \,x z^2
     +\,(a_{12}-a_{22}) (b_{21}-b_{12}) \,y^2 z \,+\, (a_{12}-a_{21}) (b_{22}-b_{21})\, y z^2  \\
   \qquad \qquad  \qquad \qquad \qquad\qquad
   +\,\bigl(\,(a_{12}-a_{21}) (b_{22}-b_{11}) \,+\, (a_{11}-a_{22})(b_{21}-b_{12})\,\bigr)  \,xyz.
     \end{matrix}
\end{equation}

A ternary cubic of the form (\ref{eq:eliminant}) is called a {\em Spohn cubic}.
This passes through the
three coordinate points in $\PP^2$. But there are other restrictions. To see this,
we consider all cubics
\begin{equation}
\label{eq:cubicofform}
c_1 x^2 y\,+\,c_2 x^2 z\,+\,c_3 x y^2\,+\,c_4 x z^2\,+\,c_5 y^2 z\,+\,c_6 y z^2\,+\,c_7 x y z.
\end{equation}
The set of such cubics is a projective space $\PP^6$ with homogeneous coordinates
$c_1,\ldots,c_7$.

\begin{proposition}
The Spohn cubics (\ref{eq:eliminant}) form the $4$-dimensional variety in $\PP^6$ 
given by
$ c_1+c_2-c_3-c_4+c_5+c_6-c_7 = 
c_2 c_4 c_5-c_3 c_4 c_5-c_2 c_3 c_6+c_4 c_5 c_6+c_3 c_4 c_7-c_4 c_5 c_7-c_4^2 c_5+c_4 c_5^2
= 0$.
This is a cubic hypersurface inside a hyperplane $\PP^5$. Its singular locus consists of
 nine points.
\end{proposition}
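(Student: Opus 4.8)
The plan is to exhibit the Spohn cubics as the image of a bilinear map and then match that image with the complete intersection cut out by the two stated equations. First I would pass to difference coordinates. Every coefficient in (\ref{eq:eliminant}) depends only on $\alpha_1 = a_{11}-a_{22},\ \alpha_2 = a_{11}-a_{21},\ \alpha_3 = a_{12}-a_{22},\ \alpha_4 = a_{12}-a_{21}$ and $\beta_1 = b_{11}-b_{12},\ \beta_2 = b_{22}-b_{11},\ \beta_3 = b_{22}-b_{21},\ \beta_4 = b_{21}-b_{12}$, which obey the two relations $\alpha_1-\alpha_2-\alpha_3+\alpha_4 = 0$ and $\beta_1+\beta_2-\beta_3-\beta_4 = 0$. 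Hence $(\alpha)$ and $(\beta)$ each range over a $\PP^2$, and reading off (\ref{eq:eliminant}) gives $c_1=\alpha_1\beta_1,\ c_2=\alpha_2\beta_2,\ c_3=\alpha_3\beta_1,\ c_4=\alpha_2\beta_3,\ c_5=\alpha_3\beta_4,\ c_6=\alpha_4\beta_3,\ c_7=\alpha_4\beta_2+\alpha_1\beta_4$. This realises the Spohn cubics as the image of a bilinear map $\Phi\colon \PP^2\times\PP^2 \dashrightarrow \PP^6$.

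Writing $L$ and $C$ for the linear and cubic forms in the statement, I would next check that both vanish on $\Phi$. Substituting the monomials above and collecting by $\beta$ yields $L = (\alpha_1-\alpha_3)(\beta_1-\beta_4)+(\alpha_2-\alpha_4)(\beta_2-\beta_3)$; the relations give $\alpha_2-\alpha_4=\alpha_1-\alpha_3$, so $L=(\alpha_1-\alpha_3)(\beta_1+\beta_2-\beta_3-\beta_4)=0$. For the cubic, the identity $C = c_4c_5\,L-\bigl(c_1c_4c_5-c_3c_4c_7+c_2c_3c_6\bigr)$ (a direct rewriting using $L$) reduces the check on $\{L=0\}$ to the vanishing of $h:=c_1c_4c_5-c_3c_4c_7+c_2c_3c_6$. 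After substitution the three terms are $\alpha_1\alpha_2\alpha_3\beta_1\beta_3\beta_4$, then $\alpha_2\alpha_3\alpha_4\beta_1\beta_2\beta_3+\alpha_1\alpha_2\alpha_3\beta_1\beta_3\beta_4$, then $\alpha_2\alpha_3\alpha_4\beta_1\beta_2\beta_3$, and they telescope to $0$. Thus $\operatorname{im}\Phi\subseteq V(L,C)$.

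To obtain equality I would compare dimensions. The only positive-dimensional fibres of $\Phi$ are the torus orbits $(\alpha,\beta)\mapsto(\lambda\alpha,\lambda^{-1}\beta)$: a generic image point fixes the ratios $\alpha_1:\alpha_3=c_1:c_3$ and $\alpha_2:\alpha_4=c_4:c_6$, which together with $\alpha_1-\alpha_2-\alpha_3+\alpha_4=0$ determine $(\alpha)\in\PP^2$ and then $(\beta)$. So $\dim\operatorname{im}\Phi=4$. On the other hand $L$ and $C$ form a regular sequence, since $C$ does not vanish identically on $\{L=0\}$, so $V(L,C)$ is a complete intersection of dimension $4$, realised as a cubic hypersurface inside the hyperplane $\PP^5=\{L=0\}$. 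Checking that this cubic is irreducible, equivalently that $C$ is irreducible modulo $L$, then upgrades the containment of the irreducible fourfold $\overline{\operatorname{im}\Phi}$ into the irreducible fourfold $V(L,C)$ to an equality, establishing the first two assertions.

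The remaining, and hardest, step is the singular locus. Because $V(L,C)$ is a complete intersection and $\nabla L=(1,1,-1,-1,1,1,-1)$ is a nonzero constant, a point of $V(L,C)$ is singular precisely when the $2\times 7$ Jacobian $[\nabla L\,;\nabla C]$ has rank one, i.e.\ when $\nabla C$ is proportional to $\nabla L$. Using $C=c_4c_5\,L-h$, on $\{L=0\}$ this is equivalent to $\nabla h=s\,\nabla L$ for some scalar $s$; Euler's relation gives $3h=sL$, so $h=0$ becomes automatic and the singular locus is defined by $L=0$ together with the seven bilinear equations $\partial_j h=s\,(\nabla L)_j$. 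I would solve this system by a case split on whether $s=0$, equivalently on the vanishing of $c_3$ and $c_4$, in each stratum reducing the equations to monomial and linear conditions solvable in closed form. The main obstacle is the bookkeeping across these strata and confirming that the common solution set is finite; carrying the computation out, conveniently cross-checked with a Gr\"obner basis, should pin down the singular locus as the nine points in the statement.
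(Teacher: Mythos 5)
Your reduction to difference coordinates, the identification $c_1=\alpha_1\beta_1,\dots,c_7=\alpha_4\beta_2+\alpha_1\beta_4$, the identity $C=c_4c_5\,L-h$ with $h=c_1c_4c_5-c_3c_4c_7+c_2c_3c_6$, the vanishing of $L$ and $h$ on the image of $\Phi$, and the fibre argument giving $\dim\overline{\Phi(\PP^2\times\PP^2)}=4$ are all correct; I checked the algebra. This is a genuinely different route from the paper, whose entire proof is the single sentence that the claim follows from a direct {\tt Macaulay2} computation, and your derivation is more informative. (The remark about torus orbits is vacuous in $\PP^2\times\PP^2$, where those orbits are points, but the fibre argument that follows it is the right one.) The irreducibility of $C$ modulo $L$, which you defer, is also easy to close by hand: eliminating $c_7=c_1+c_2-c_3-c_4+c_5+c_6$ gives $C|_{L=0}=-c_1c_4(c_5-c_3)-c_3\bigl[c_2c_6-c_4(c_2-c_3-c_4+c_5+c_6)\bigr]$, which is linear in $c_1$; any factorization would force $c_4$ or $c_5-c_3$ to divide the $c_1$-free part, and neither does (modulo $c_4$ that part is $c_2c_3c_6$, and at $c_5=c_3$ it equals $c_3(c_2-c_4)(c_6-c_4)$). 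Up to this point your plan proves the first two sentences of the Proposition.

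The genuine gap is the final step, and it is fatal to the plan as written: the system you set up does \emph{not} have a finite solution set, so the step ``confirming that the common solution set is finite'' cannot be carried out. Since $C$ contains no $c_1$, the first coordinate of $\nabla C$ is $0$ while that of $\nabla L$ is $1$; hence the proportionality factor is forced to be zero and the singular locus is exactly $\{L=0,\ \nabla C=0\}$ (in your notation, $s$ is pinned to $c_4c_5$). This locus has one-dimensional components. Concretely, every point of the line $(c_1:\dots:c_7)=(u:v:u:v:u:v:u{+}v)$, $(u:v)\in\PP^1$, lies on $V(L,C)$ with $\nabla C=0$: for instance $\partial C/\partial c_2=c_4c_5-c_3c_6=uv-uv=0$ and $\partial C/\partial c_3=-c_4c_5-c_2c_6+c_4c_7=-uv-v^2+v(u+v)=0$, and the remaining partials vanish the same way. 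These points are honest Spohn cubics, namely $(x+y)(x+z)(uy+vz)$, arising from payoff matrices with $a_{11}=a_{12}$ and $b_{11}=b_{21}$, so they cannot be discarded. Another such line is $\{c_2=c_3=c_4=c_5=0,\ c_1=c_7-c_6\}$. Running your case split to the end shows that the singular locus is a curve of degree nine, a union of nine \emph{lines}, not nine points. So your proposed computation, executed correctly, contradicts the last sentence of the Proposition rather than confirming it; presumably the intended (and {\tt Macaulay2}-verifiable) statement is ``nine lines'', and your write-up must either prove that corrected statement or flag the discrepancy, instead of promising that the Gr\"obner computation will return nine points.
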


\begin{proof}
This is obtained by a direct computation using the software {\tt Macaulay2} \cite{M2}.
\end{proof}
 
 While the general Spohn cubic is smooth, it can be singular
 for special payoff matrices. To identify these,  we compute
the discriminant $\mathcal{D}$ of the ternary cubic (\ref{eq:cubicofform}).
This discriminant is an irreducible polynomial of degree $12$ 
in seven unknowns. It is a sum of $127$ terms:
$$ \mathcal{D} \,\,=\,\, 16  c_1^5  c_4^2  c_5^2  c_6^3+16  c_1^4  c_2^2  c_5^2  c_6^4-24  c_1^4  c_2  c_4^2  c_5^3  c_6^2 +\, \cdots\, +c_2^2  c_3^2  c_4^2  c_5^2  c_7^4-c_2^2  c_3^2  c_4  c_5  c_6  c_7^5.
$$
We now plug in the Spohn cubic (\ref{eq:eliminant}).
The resulting discriminant
is a polynomial of degree $24$ in the
eight unknowns $a_{ij},b_{ij}$. It factors into nine irreducible factors, namely
$\, \mathcal{D}(a,b) =$
$$  (a_{11}-a_{12})^2 (a_{11}-a_{21})^2	(a_{12}-a_{22})^2 (a_{21}-a_{22})^2	
(b_{11}-b_{12})^2 (b_{11}-b_{21})^2 (b_{12}-b_{22})^2 (b_{21}-b_{22})^2	
\mathcal{E}(a,b).
$$
The last factor  $\mathcal{E}(a,b)$  has $587$ terms of
 degree $8$.
Nonvanishing of the discriminant
$\mathcal{D}(a,b)$ ensures that the Spohn cubic
(\ref{eq:eliminant}) is smooth in $\PP^2$, and hence so is the
curve $\mathcal{V}_X$ in $\PP^3$.

We have argued that the general Spohn curve $\mathcal{V}_X$ is an elliptic curve.
It is thus natural to express its {\em j-invariant}, which identifies
the isomorphism type, in terms of the payoff matrices.

\begin{proposition} \label{prop:jinv}
The j-invariant of the Spohn cubic equals
$\, \mathcal{I}(a,b)^3/\mathcal{D}(a,b) $,
where $\mathcal{I}(a,b)$ is an irreducible polynomial
of degree $8$ with $633$ terms in the entries of the two payoff tables.
\end{proposition}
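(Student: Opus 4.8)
The plan is to reduce the claim about the j-invariant to an explicit symbolic computation, organized so that the two factors $\mathcal{I}(a,b)$ and $\mathcal{D}(a,b)$ are identified separately and their degrees and term counts are verified. Recall the classical formula for the j-invariant of a plane cubic: writing a ternary cubic in terms of its two fundamental $\mathrm{SL}_3$-invariants $S$ (the Aronhold invariant of degree $4$) and $T$ (the invariant of degree $6$), the discriminant is $\mathcal{D} = T^2 - 64\, S^3$ up to a scalar, and the j-invariant is
\[
j \;=\; \frac{(48\,S)^3}{\mathcal{D}}.
\]
So the first step is to take the generic cubic (\ref{eq:cubicofform}) with coefficients $c_1,\dots,c_7$, compute $S$ and $T$ as polynomials in the $c_i$, and confirm that $\mathcal{D} = T^2 - 64 S^3$ agrees with the degree-$12$ discriminant already displayed in the text.

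Next I would substitute the Spohn coefficients from (\ref{eq:eliminant}) into $S$, obtaining a polynomial in $a_{ij},b_{ij}$. Since $S$ has degree $4$ in the $c_i$ and each $c_i$ is bilinear (degree $1$ in the $a$-differences and degree $1$ in the $b$-differences), the substituted $S(a,b)$ has degree $8$. The key structural observation to establish is that this polynomial factors: the eight linear forms $(a_{11}-a_{12}),\dots,(b_{21}-b_{22})$ already appear as squared factors in $\mathcal{D}(a,b)$, and one checks how many of them divide $S(a,b)$. Because $j = (48 S)^3/\mathcal{D}$ and $\mathcal{D}(a,b)$ contains each of these eight linear forms to the second power while $S(a,b)^3$ must cancel them appropriately, I expect $S(a,b)$ to be divisible by all eight of these forms, so that writing
\[
S(a,b) \;=\; \text{(scalar)} \cdot (a_{11}-a_{12})(a_{11}-a_{21})\cdots(b_{21}-b_{22})\cdot \mathcal{I}(a,b)
\]
isolates the genuinely new factor $\mathcal{I}(a,b)$. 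Cubing this and dividing by $\mathcal{D}(a,b) = \bigl[\prod(\cdots)^2\bigr]\mathcal{E}(a,b)$, the eight squared linear forms cancel, leaving $j = \mathcal{I}(a,b)^3/\mathcal{E}(a,b)$ times the residual; I would then reconcile the normalization so that the stated form $\mathcal{I}(a,b)^3/\mathcal{D}(a,b)$ holds as written (the paper's $\mathcal{D}(a,b)$ is the full degree-$24$ discriminant, so $\mathcal{I}$ must carry the matching linear factors—I would track this bookkeeping carefully to match the claimed degree $8$).

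Finally, with $\mathcal{I}(a,b)$ pinned down, irreducibility, the degree-$8$ claim, and the count of $633$ terms are all verified by a direct computation in \texttt{Macaulay2} \cite{M2}, exactly as the companion Proposition's proof does for the discriminant. The main obstacle I anticipate is not conceptual but bookkeeping: ensuring the scalar normalizations in the invariant-theoretic formula $j=(48S)^3/\mathcal{D}$ match the particular scaling of $\mathcal{D}(a,b)$ recorded in the text, and correctly distributing the eight squared linear factors between $\mathcal{I}^3$ and $\mathcal{D}$ so that the quotient is presented in lowest terms with the advertised factor. Since the statement only asserts the \emph{form} of the answer together with the degree and term count of $\mathcal{I}$, once the factorization of $S(a,b)$ is established the remaining verifications are mechanical.
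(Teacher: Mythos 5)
Your opening step coincides with the paper's: the j-invariant of a ternary cubic is the cube of the Aronhold invariant $S$ divided by the discriminant (the paper cites \cite[Example 11.12]{MS} for exactly this), and substituting the Spohn coefficients into $S$ yields a polynomial of degree $8$ in the $a_{ij},b_{ij}$. But your ``key structural observation'' is false, and it cannot be repaired: you expect the substituted Aronhold invariant $S(a,b)$ to be divisible by all eight linear forms $(a_{11}-a_{12}),\dots,(b_{21}-b_{22})$ so as to cancel the squared linear factors of $\mathcal{D}(a,b)$. Since those eight forms already have total degree $8$, such a factorization would force the residual factor $\mathcal{I}(a,b)$ to be a constant---directly contradicting the statement you are proving, which asserts that $\mathcal{I}$ is irreducible of degree $8$ with $633$ terms. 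The correct degree bookkeeping involves no cancellation at all: $\deg \mathcal{I}^3 = 24 = \deg \mathcal{D}$, and $\mathcal{I}(a,b)$ \emph{is} the Aronhold invariant of the Spohn cubic (this identification is essentially the paper's entire proof), so the fraction $\mathcal{I}^3/\mathcal{D}$ is already the answer in the stated form, in lowest terms since $\mathcal{I}$ is irreducible and distinct from $\mathcal{E}$.

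The flaw in your reasoning is the premise that $S^3$ ``must cancel'' the squared linear factors of $\mathcal{D}$. Nothing forces the j-invariant, viewed as a rational function of the payoff entries, to remain finite when, say, $a_{11}=a_{12}$: on that locus the Spohn cubic is singular, its discriminant vanishes, and $j$ genuinely has a pole. So the eight squared linear forms belong in the denominator and are never cancelled. Had you carried out your planned computation, you would have found that $S(a,b)$ has no linear factors, and the argument collapses to the paper's one-liner: identify $\mathcal{I}(a,b)$ with the Aronhold invariant of (\ref{eq:eliminant}), then verify the degree, irreducibility, and the term count $633$ by machine computation, modulo the scalar normalization issue you correctly flag.
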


\begin{proof} For any ternary cubic, the j-invariant is the cube of the Aronhold invariant 
divided by the discriminant; see \cite[Example 11.12]{MS}. Here,
$\mathcal{I}(a,b)$ is the Aronhold invariant of (\ref{eq:eliminant}).
\end{proof}

The dependency equilibria of our game are the points in
  $\mathcal{V}_X \cap \Delta$.
 To better understand this semialgebraic set, we identify
 some landmarks on the curve $\mathcal{V}_X$.
 The first such landmark is the Nash point, which is the unique  rank one 
 matrix in $\PP^3$ lying on $\mathcal{V}_X$:
  \begin{equation}
\label{eq:nash22}
 N \,\,=\,\, \begin{bmatrix} b_{22} - b_{21} \\ b_{11} - b_{12} \end{bmatrix}
\begin{bmatrix} a_{22}-a_{12} & a_{11} - a_{21} \end{bmatrix}
\end{equation}
Suppose that the following holds and the two signs are non-zero:
\begin{equation}
\label{eq:signcondition}
 {\rm sign}(a_{11}-a_{21})= {\rm sign}(a_{22}-a_{12}) 
\quad {\rm and} \quad {\rm sign}(b_{11}-b_{12})={\rm sign}(b_{22}-b_{21}). 
\end{equation}
Then we can scale the matrix $N$ in (\ref{eq:nash22}) by
$\bigl(( a_{11} - a_{21}+a_{22}-a_{12} ) (b_{11} - b_{12}+b_{22} - b_{21} )\bigr)^{-1}$
to land in $\Delta$, and the result is the
 unique  totally mixed Nash equilibrium of the game.
 
Next recall that the four coordinate points $E_{ij}$
lie on the curve $\mathcal{V}_X$.
Their tangent lines
${\rm span}(D_{ij},E_{ij})$ are specified by their intersection points
with the opposite coordinate planes:
$$ \begin{small}
\begin{matrix}
\! D_{11} = \begin{bmatrix}  0 \!\! & \!\! (a_{11}{-}a_{21}) (b_{22}{-}b_{11}) \\ 
 (a_{22}{-}a_{11}) (b_{11}{-}b_{12}) \!\! & \!\! (a_{11}{-}a_{21})(b_{11}{-}b_{12} )\end{bmatrix} \!,
D_{12} = \begin{bmatrix} (a_{22}{-}a_{12})(b_{12}{-}b_{21}) \!\! & \!\! 0 \\ 
 (a_{22}{-}a_{12})(b_{11}{-}b_{12}) \! \! &\!\! (a_{12}{-}a_{21})(b_{11}{-}b_{12}) \end{bmatrix}  \medskip \\
\! D_{21} =  \begin{bmatrix} (a_{21}{-}a_{12})(b_{21}{-}b_{22}) \!\! & \!\!
   (a_{11}{-}a_{21})(b_{21}{-}b_{22}) \\
    0 \!\! & \!\! (a_{11}{-}a_{21})(b_{12}{-}b_{21})\end{bmatrix}\!,
D_{22} = \begin{bmatrix}(a_{22}{-}a_{12})(b_{21}{-}b_{22}) \!\! & \!\!
 (a_{11}{-}a_{22})(b_{21}{-}b_{22}) \\ (a_{12}{-}a_{22})(b_{11}{-}b_{22} )
 \!\! & \!\! 0 \end{bmatrix}
\end{matrix}  \end{small}
$$
And, finally, our curve intersects each coordinate plane in a unique non-coordinate point:
$$ \begin{small}
\begin{matrix}
\! F_{11} = \begin{bmatrix} 0 \!\! & \!\! 
(a_{12}{-}a_{21})(b_{21}{-}b_{22}) \\
(a_{12}{-}a_{22})(b_{21}{-}b_{12}) \!\! & \!\! (a_{12}{-}a_{21})(b_{12}{-}b_{21}) 
\end{bmatrix} \! , 
F_{12} =  \begin{bmatrix}(a_{11}{-}a_{22})(b_{21}{-}b_{22}) \!\! & \!\! 0 \\
 (a_{11}{-}a_{22})(b_{22}{-}b_{11}) \!\! & \!\! (a_{11}{-}a_{21})(b_{11}{-}b_{22})
 \end{bmatrix} \medskip  \\
 \! F_{21} = \begin{bmatrix} 
 (a_{12}{-}a_{22})(b_{11}{-}b_{22}) \!\! & \!\! 
 (a_{11}{-}a_{22})(b_{22}{-}b_{11}) \\
  0 \!\! & \!\! (a_{11}{-}a_{22})(b_{11}{-}b_{12}) \end{bmatrix} \! ,
 F_{22} = \begin{bmatrix} (a_{12}{-}a_{21})(b_{12}{-}b_{21}) \!\! & \!\!
  (a_{11}{-}a_{21})(b_{21}{-}b_{12}) \\
   (a_{12}{-}a_{21})(b_{11}{-}b_{12}) \!\! & \!\! 0 \end{bmatrix}
   \end{matrix}
   \end{small}
$$
We now show that dependency equilibria may exist even if there are no
Nash equilibria in~$\Delta$:

\begin{example}[Disconnected equilibria]  \label{ex:disconnected}
Consider the game $X$ given by the payoff matrices
$$ 
\begin{bmatrix}
a_{11} \! &\! a_{12} \\ a_{21} \! & \! a_{22} \end{bmatrix}
 =  \begin{bmatrix}
 2 & 0 \\ 4 & 1
\end{bmatrix} \quad {\rm and} \quad
\begin{bmatrix}
b_{11} \! &\! b_{12} \\ b_{21} \! & \! b_{22} \end{bmatrix}
 =  \begin{bmatrix}
 2 & 1 \\ 4 & 3 \end{bmatrix}, \quad \hbox{with Nash point} \,\,
 N \, = \, \begin{bmatrix} -1 & 2 \\ \phantom{-}1 &\!\!\! -2 \end{bmatrix}\! .
$$
Here, $\mathcal{V}_X$ is smooth and irreducible. This
elliptic curve has j-invariant
$- (7^3 103^3)/(2^8 3^2 47)$.
The real curve $\mathcal{V}_X \cap \Delta$ 
has two connected components, both disjoint from
the Segre surface $\langle p_{11} p_{22} -p_{12} p_{21}\rangle$.
One arc connects $E_{11}$ and $F_{21}$, and the other
arc connects $E_{22}$ and $F_{12}$.
\hfill $\diamond$
\end{example}

The combinatorics of the curve    $\mathcal{V}_X \cap \Delta$
 is given by the signs of the entries in the nine matrices
  $N$, $D_{ij}$ and $F_{ij}$. These signs
  are determined by the respective orderings
 of $a_{11},a_{12},a_{21},a_{22}$ and
 $b_{11},b_{12},b_{21},b_{22}$, assuming that these
 are quadruples of distinct numbers.
 We derive the following theorem
 by analyzing all   $(4!)^2 = 576$ possibilities for these pairs of
 orderings.

\begin{theorem} \label{thm:arcs}
For a generic $2 \times 2$ game $X$,
the curve of dependency equilibria $\mathcal{V}_X \cap \Delta$ 
has either $\,0$, $1$ or $2$ connected components,
each of which is an arc between two boundary points.
If (\ref{eq:signcondition}) holds then there is exactly one $EE$, $EF$ or $FF$ arc.
If (\ref{eq:signcondition}) does not hold then all components are
$EF$ arcs, and their number can be $\,0$, $1$ or $2$.
\end{theorem}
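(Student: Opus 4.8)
The plan is to analyze the real topology of the smooth elliptic curve $\mathcal{V}_X \subset \PP^3$ and to track exactly where it crosses the boundary of the open simplex $\Delta$. Since $\mathcal{V}_X$ is a smooth real curve of genus one with a real point (e.g.\ $E_{11}$), its real locus $\mathcal{V}_X(\RR)$ is a disjoint union of one or two topological circles. The intersection with $\Delta$ is an open subset of this compact $1$-manifold, hence a disjoint union of open arcs, provided no oval lies entirely inside $\Delta$. The first task is therefore to locate all points where $\mathcal{V}_X$ meets the four coordinate hyperplanes $\{p_{ij}=0\}$. By B\'ezout, the quartic $\mathcal{V}_X$ meets each such plane in four points; for the plane $\{p_{ij}=0\}$ these are the three coordinate points $E_{kl}$ lying on it together with the single non-coordinate point $F_{ij}$. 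Thus the curve meets $\partial\Delta$ only at the eight landmarks $E_{11},\ldots,E_{22},F_{11},\ldots,F_{22}$, and one checks directly that each $F_{ij}$ as written lies on both quadrics and on the relevant plane.

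Next I would establish a local criterion for when a landmark is an endpoint of a $\Delta$-arc. Near a vertex $E_{ij}$ the curve is parametrized to first order by $E_{ij}+tD_{ij}$, so the three coordinates other than $p_{ij}$ pick up the signs of the corresponding entries of $D_{ij}$ as $t$ changes sign. Consequently the curve enters $\Delta$ on exactly one side of $E_{ij}$ precisely when the three nonzero entries of $D_{ij}$ share a common sign; otherwise it passes through the vertex without meeting $\Delta$. At a point $F_{ij}$, lying in the relative interior of a facet, only the coordinate $p_{ij}$ changes sign as the curve crosses transversally, so $F_{ij}$ is an arc endpoint exactly when its three nonzero entries share a common sign. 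This reduces the whole combinatorial question to the sign patterns of the nine matrices $N$, $D_{ij}$ and $F_{ij}$, and each such sign is a product of differences $a_{kl}-a_{k'l'}$ and $b_{kl}-b_{k'l'}$, hence determined by the relative orderings of $a_{11},a_{12},a_{21},a_{22}$ and of $b_{11},b_{12},b_{21},b_{22}$.

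With the endpoint criteria in hand, the number of arcs equals half the number of landmarks that are endpoints, since traversing each oval crosses $\partial\Delta$ an even number of times and, by the criteria, each crossing occurs at a distinct landmark that bounds exactly one arc. To obtain the finer classification — including the bound of at most two arcs — one must trace the cyclic order of the landmarks along each oval and record which endpoints are joined by a $\Delta$-arc. The organizing principle is the Nash point $N$: the sign condition (\ref{eq:signcondition}) is exactly the condition that $N$, the unique rank-one point of $\mathcal{V}_X$, lie in $\Delta$. When it holds, the arc through $N$ crosses the Segre surface $\{p_{11}p_{22}-p_{12}p_{21}=0\}$ at $N$, and one shows this is the only arc, of type $EE$, $EF$ or $FF$; when it fails, $N\notin\Delta$, every arc avoids the Segre surface, and the endpoint bookkeeping forces each arc to join one coordinate point to one facet point, i.e.\ to be an $EF$ arc, as in Example \ref{ex:disconnected}.

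Finally I would carry out the verification over all $(4!)^2=576$ pairs of orderings, reduced by the symmetries of the game — swapping the two rows, the two columns, or the two players — which act compatibly on the orderings and permute the landmarks. The local criteria turn each orbit representative into a purely combinatorial sign computation, and one checks the asserted arc structure case by case. The main obstacle is not the local analysis, which is routine, but the global gluing: proving that the cyclic arrangement of endpoints along the ovals always produces the clean dichotomy between a single $EE/EF/FF$ arc under (\ref{eq:signcondition}) and exclusively $EF$ arcs otherwise, and in particular ruling out an oval contained entirely in $\Delta$. This is precisely what the finite case analysis, structured by the Segre-surface dichotomy above, is designed to settle.
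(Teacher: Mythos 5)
Your proposal takes essentially the same route as the paper: the paper likewise reduces the combinatorics of $\mathcal{V}_X \cap \Delta$ to the signs of the entries of the nine matrices $N$, $D_{ij}$ and $F_{ij}$ --- which depend only on the relative orderings of $a_{11},a_{12},a_{21},a_{22}$ and of $b_{11},b_{12},b_{21},b_{22}$ --- and then derives the theorem by checking all $(4!)^2 = 576$ pairs of orderings. Your added scaffolding (Bézout locating the boundary crossings at the $E_{ij}$ and $F_{ij}$, the local endpoint criteria via the signs of $D_{ij}$ and $F_{ij}$, and the identification of (\ref{eq:signcondition}) with $N \in \Delta$) is precisely the unwritten detail behind the paper's one-paragraph argument, so the two approaches coincide.
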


\section{The Payoff Region}
\label{sec5}

The $n$ payoff tensors $X^{(i)}$ define a canonical linear map from tensor space 
to payoff space:
\begin{equation}
\label{eq:payoffmap}
 \pi_X \,: \, V \rightarrow \RR^n\,,\,\, \, P \,\,\mapsto\, \,\bigl(P X^{(1)} , \,PX^{(2)} , \,\ldots\,, \,P X^{(n)} \bigr). 
 \end{equation}
The $i$th coordinate  $P X^{(i)}$ is the
expected payoff for player $i$, given by the formula in
(\ref{eq:expected}).
We call $\pi_X$ the {\em payoff map}.
By (\ref{eq:Mx}), this is the lifting to $V$ of the
algebraic payoff map in~(\ref{eq:algpayoff}).

The image of the probability simplex
$\Delta$ is a convex polytope $\pi_X(\Delta)$  that is usually full-dimensional in $\RR^n$.
This polytope is known as the {\em cooperative payoff region} of the game $X$.
Its points are all possible expected payoff vectors for the game in question.
Tu and Jiang \cite{TJ} investigate the semialgebraic subset that is obtained
by projecting all rank one tensors in $\Delta$. This is a nonconvex subset
of $\pi_X(\Delta)$, known as the {\em noncooperative payoff region}. 

For $2 \times 2$ games, this region is the image of the Segre surface
under a linear projection into the plane. Our readers might like to
compare \cite[Figure 1]{TJ} with the surface
shown in Figure \ref{fig: dependency equilibria}.

We are interested in the subset of payoff vectors that arise from dependency equilibria:
$$ \mathcal{P}_X \,:= \,\pi_X( \mathcal{V}_X \cap \Delta )  \,\subset \, \pi_X(\Delta) \,\subset \, \RR^n . $$
The set $\mathcal{P}_X$ is semialgebraic, by Tarski's Theorem on Quantifier Elimination.
The authors of \cite{TJ} would probably call $\mathcal{P}_X$
the {\em dependency payoff region} of the game $X$. In 
the present paper, we just use the term {\em payoff region} for $\mathcal{P}_X$,
since our focus is on dependency equilibria.

We begin by noting that, at every dependency equilibrium of $X$, the expected payoffs agree with the
various conditional expected payoffs. We can thus use conditional expectations in
(\ref{eq:payoffmap})
to define the payoff region $\mathcal{P}_X$.
This is the content of the following lemma.

\begin{lemma} Let $P$ be a tensor in $V$ with $p_{++\cdots+} = 1$ that represents a point in
     $\mathcal{V}_X$. Then 
     \begin{equation}
     \label{eq:cep}
PX^{(i)} \,\,=\,\, \sum_{j_1 = 1}^{d_1} \cdots \widehat{\sum_{j_{i} = 1}^{d_{i}}} \cdots \sum_{j_n = 1}^{d_n} X^{(i)}_{j_1 \cdots   k  \cdots j_n} \frac{p_{j_1 \cdots  k  \cdots j_n}}{p_{+\cdots+k+\cdots+}}
\quad \hbox{for all  $\,i \in [n]\,$ and $ \,k \in [d_i]$.}
\end{equation}
\end{lemma}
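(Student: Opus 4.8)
The plan is to read the statement off directly from the rank-one condition defining $\mathcal{V}_X$, using the normalization $p_{++\cdots+}=1$ only to pin down the constant of proportionality. Fix a player $i$ and denote the two columns of the matrix $M_i(P)$ from (\ref{eq: Spohn matrices}) by $u$ and $v$, so that $u_k = p_{+\cdots+k+\cdots+}$ is the marginal probability that player $i$ plays $k$, while $v_k = \sum_{j_1}\cdots\widehat{\sum_{j_i}}\cdots\sum_{j_n} X^{(i)}_{j_1\cdots k\cdots j_n}\, p_{j_1\cdots k\cdots j_n}$ is the corresponding unnormalized conditional payoff. With this notation the right-hand side of (\ref{eq:cep}) is exactly the ratio $v_k/u_k$.

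First I would observe that $u \neq 0$, since its entries sum to $\sum_{k} u_k = p_{++\cdots+} = 1$. Because $P$ represents a point of $\mathcal{V}_X$, the matrix $M_i(P) = [\,u \mid v\,]$ has rank $\le 1$, and as its first column is nonzero it has rank exactly one, with the second column proportional to the first. Hence there is a unique scalar, which I call $x_i$, such that $v = x_i\, u$, i.e. $v_k = x_i\, u_k$ for every $k \in [d_i]$. This gives $v_k/u_k = x_i$ for all $k$ with $u_k \neq 0$; in particular the conditional expected payoff does not depend on $k$, which recovers the defining equations (\ref{eq: dependency equilibria}).

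It remains to identify this common value $x_i$ with the expected payoff $PX^{(i)}$. For this I would split the sum (\ref{eq:expected}) according to the strategy $j_i = k$ chosen by player $i$, which yields $PX^{(i)} = \sum_{k=1}^{d_i} v_k$. Substituting $v_k = x_i\, u_k$ and using $\sum_k u_k = 1$ gives $PX^{(i)} = x_i \sum_k u_k = x_i$. Combining the two computations, $PX^{(i)} = x_i = v_k/u_k$, which is precisely (\ref{eq:cep}).

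There is no serious obstacle here; the single point requiring care is the \emph{direction} of the proportionality. Rank $\le 1$ alone only says that the two columns $u$ and $v$ are linearly dependent, and one must invoke $u \neq 0$, guaranteed by the normalization $p_{++\cdots+}=1$, to conclude that it is $v$ that is a scalar multiple of $u$ rather than conversely. This is exactly what makes the constant $x_i$ well defined and lets a single scalar serve simultaneously as the common conditional expected payoff and, after summation against the marginals $u_k$ that add to $1$, as the unconditional expected payoff $PX^{(i)}$.
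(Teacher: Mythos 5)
Your proof is correct and follows essentially the same route as the paper: both exploit the rank-one condition on $M_i(P)$ together with the normalization $p_{++\cdots+}=1$ to identify the common ratio $v_k/u_k$ with $PX^{(i)}=\sum_k v_k$. The paper phrases the linear algebra via a row operation (replacing the first row by the sum of all rows, so that the vanishing $2\times 2$ minors read off the identity directly), while you phrase it as column proportionality $v=x_i u$, but this is only a cosmetic difference.
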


\begin{proof}
The $d_i \times 2$ matrix $M_i$ in (\ref{eq: Spohn matrices}) has rank one, by
definition of $\mathcal{V}_X$.
We replace the first row by the sum of all rows. This transforms $M_i$  into the following matrix
whose rank is one:
\begin{equation}
\label{eq:whoserankisone}
\begin{small}
\begin{bmatrix}
\,1 & PX^{(i)} \\
\,p_{+\cdots + 2 + \cdots+} & \sum_{j_1 = 1}^{d_1} \cdots \widehat{\sum_{j_{i} = 1}^{d_{i}}} \cdots \sum_{j_n = 1}^{d_n} X^{(i)}_{j_1 \cdots  2  \cdots j_n} p_{j_1 \cdots  2  \cdots j_n} \\
\,\vdots & \vdots \\
\,p_{+\cdots + d_i + \cdots+} & \sum_{j_1 = 1}^{d_1} \cdots \widehat{\sum_{j_{i} = 1}^{d_{i}}} \cdots \sum_{j_n = 1}^{d_n} X^{(i)}_{j_1 \cdots  d_i  \cdots j_n} p_{j_1 \cdots  d_i  \cdots j_n}
\end{bmatrix}\! . 
\end{small}
\end{equation}
The $2 \times 2$ minor given by the first row and the $k$th row is zero;
see also (\ref{eq:Mx}).
This implies the desired identity (\ref{eq:cep}) for $k \geq 2$.
The case $k=1$ is obtained by swapping rows in $M_i$.
\end{proof}

\begin{example}[$2 \times 2$ games] \label{ex:typically}
The polygon $\pi_X(\Delta)$ is the convex hull in $\RR^2$ of
the points 
$(a_{11},b_{11})$,
$(a_{12},b_{12})$,
$(a_{21},b_{21})$ and
$(a_{22},b_{22})$, so 
it is typically a triangle or a quadrilateral.
This polygon contains the {\em payoff curve} $\mathcal{P}_X$, which is the
image of the curve $\mathcal{V}_X \cap \Delta$ under the payoff map $\pi_X$.
This is a plane cubic, defined by the determinant of the
Konstanz matrix
\begin{equation}
\label{eq:Konstanz22}
 K_X(x) \quad = \quad
\begin{bmatrix}
 x_1 - a_{11} & x_1-a_{12} & 0 & 0 \\
 0 & 0 & x_1 - a_{21} & x_1-a_{22} \\
 x_2 - b_{11} & 0 & x_2 - b_{21} & 0 \\
 0 & x_2 - b_{12} & 0 & x_2 -b_{22} 
\end{bmatrix}.
\end{equation}
For each point $x $ on this curve, the kernel of (\ref{eq:Konstanz22})
 gives the unique matrix $P$ satisfying
$\pi_X(P) = x$.
The payoff region $\mathcal{P}_X$ is the subset of points $x$ on the curve
for which $P > 0$. 

Figure \ref{fig: payoffcurve1}
shows the payoff region for  the Bach or Stravinsky game in Example~\ref{ex:bach}.
It is the blue arc inside the yellow triangle
$ \pi_X(\Delta) = {\rm conv}\{(0,0),(2,3),(3,2)\}$.
This picture  is the image of  Figure 
\ref{fig: dependency equilibria}
under the payoff map $\pi_X$.
Figure \ref{fig: payoffcurve2} shows a perturbed version, with $a_{11}=3.3$ and $b_{22}=3.2$,
where the Spohn curve is irreducible.
\hfill $\diamond$
\end{example}
\begin{figure}
\centering
\begin{subfigure}{.5\textwidth}
  \centering
  \includegraphics[width=.9\linewidth]{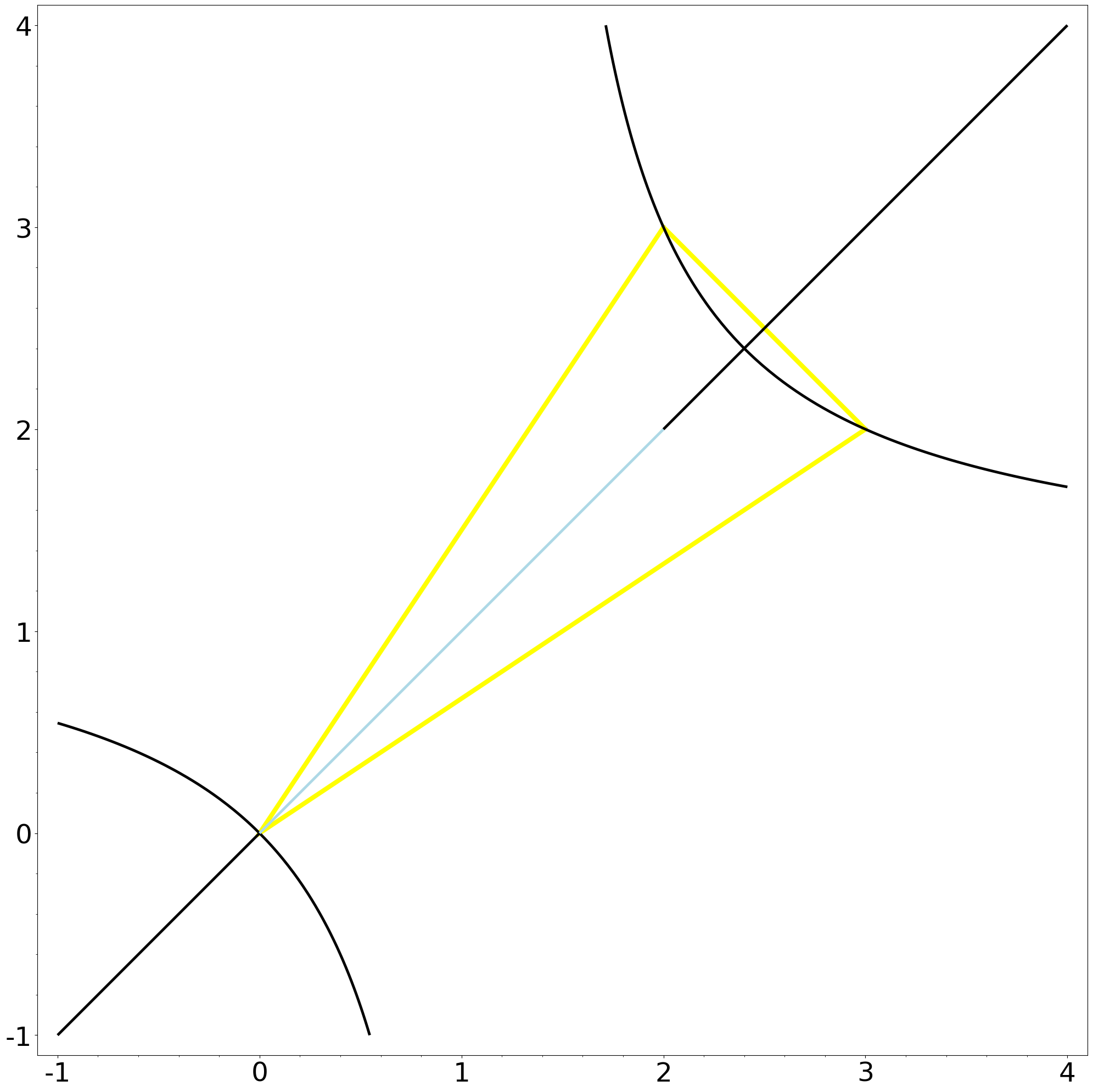}
  \caption{}
\label{fig: payoffcurve1}
\end{subfigure}%
\begin{subfigure}{.5\textwidth}
  \centering
  \includegraphics[width=.9\linewidth]{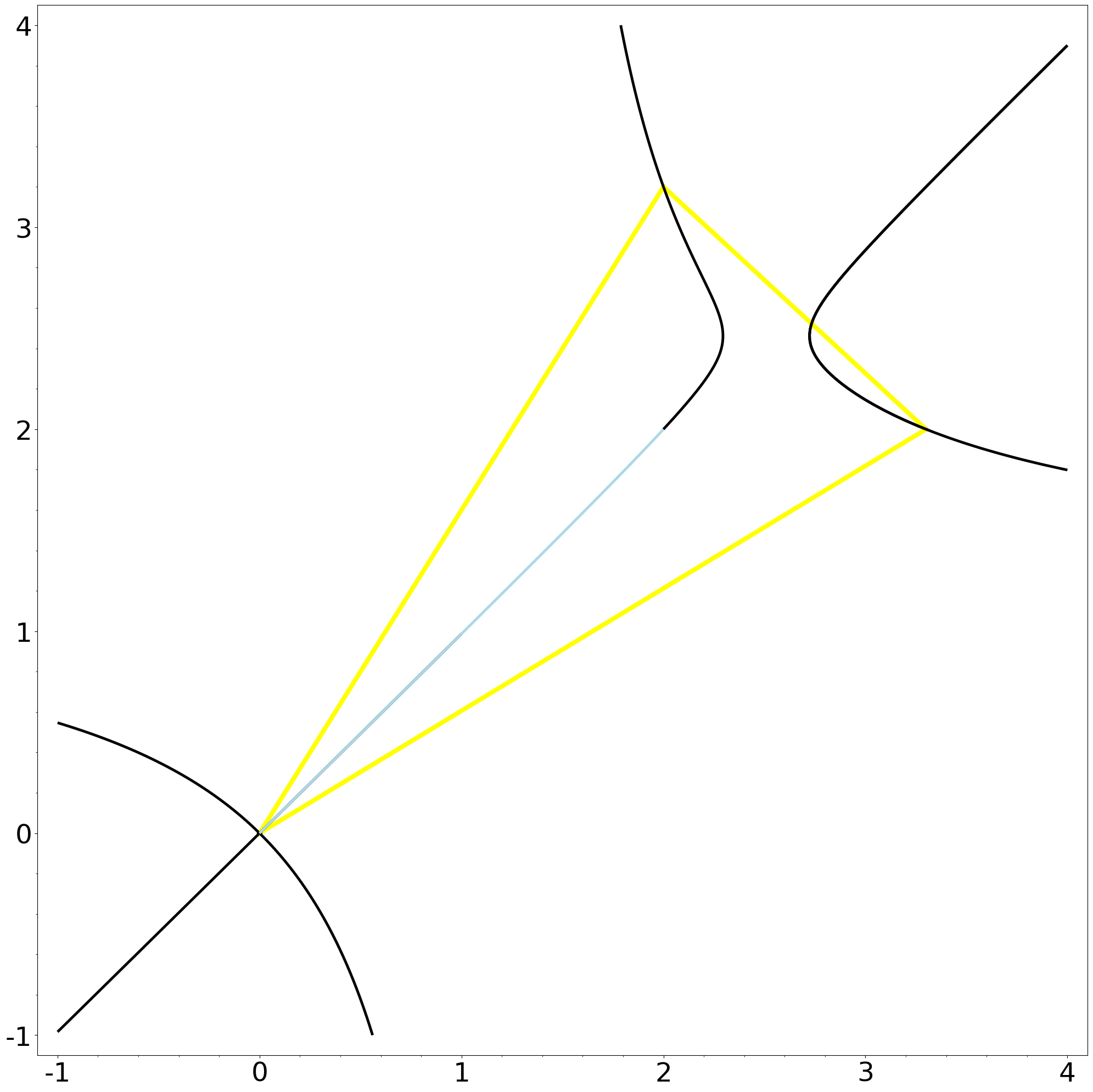}
  \caption{}
\label{fig: payoffcurve2}
\end{subfigure}
\caption{The payoff region for each of these $2 \times 2$ games is the blue arc in the yellow triangle.}
\label{fig: payoff curve}
\end{figure}

We now consider cases other than $2 \times 2$ games, so that
${\rm dim}(\mathcal{V}_X)  \geq n$ holds.
We further assume that $X$ is generic and that $\mathcal{V}_X \cap \Delta$ is non-empty.
Since the algebraic payoff map $\pi_X$  in (\ref{eq:algpayoff}) is dominant,
the payoff region $\mathcal{P}_X$ is a full-dimensional semialgebraic subset of~$\RR^n$.

\begin{figure}[h]
\centering
\includegraphics[width=13cm]{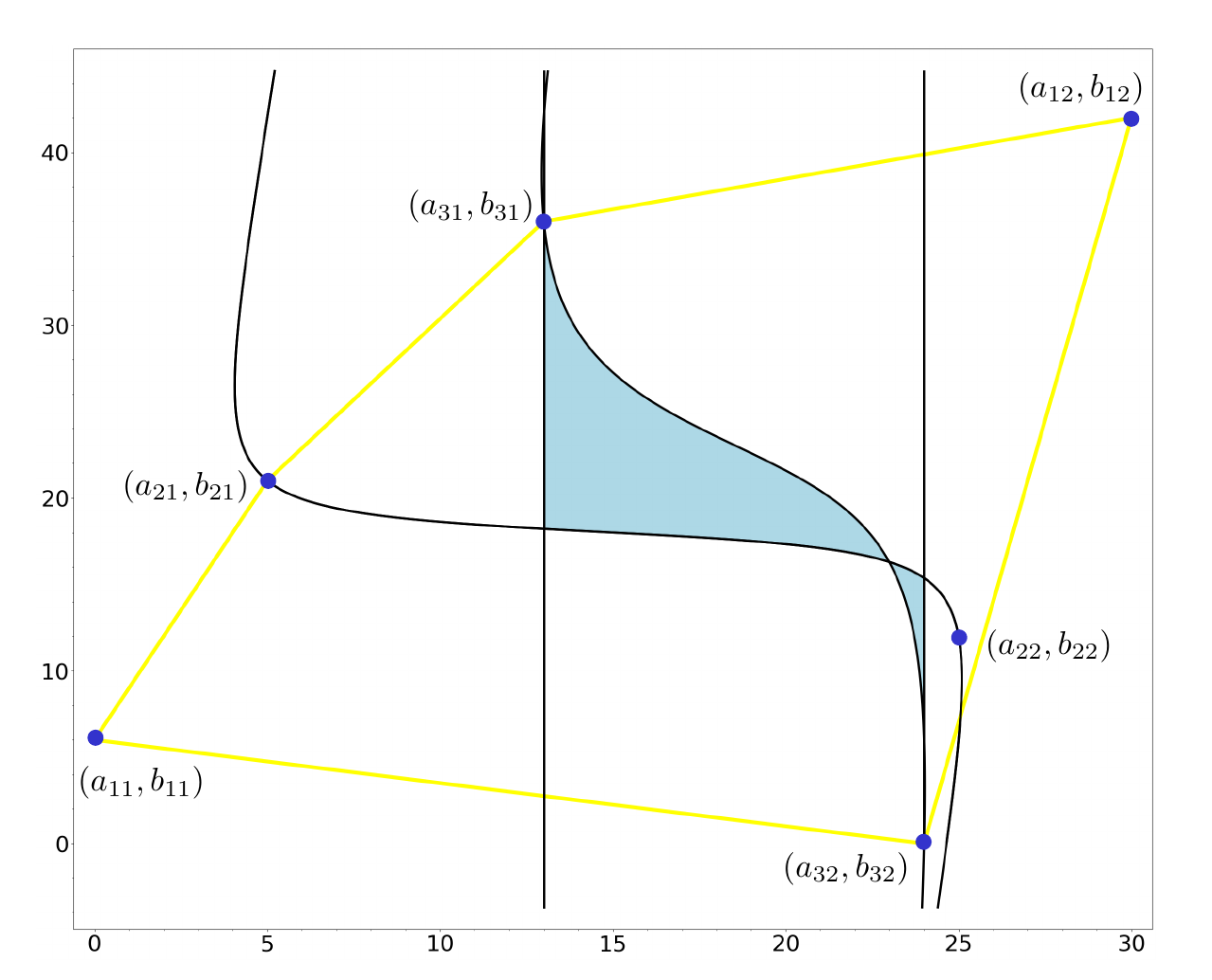}
\caption{The payoff region $\mathcal{P}_X$ for the $3 \times 2$ game in Example \ref{ex:23generic}
consists of two curvy triangles, inside the pentagon $\pi_X(\Delta)$.
Its boundary is given by two lines and two cubics.
    \label{fig: payoff region}}
\end{figure}

\begin{example}[$3\times2$ games] \label{ex:23generic}
The following two payoff matrices exhibit the generic behavior:
 \begin{equation}
 \label{eq:genericbehavior} 
 \begin{small}
 X^{(1)}\, =\,
 \begin{bmatrix}
  a_{11} & \! a_{12} \\
  a_{21} & \! a_{22} \\
  a_{31} & \! a_{32} \end{bmatrix} \,=\,
    \begin{bmatrix} 0 & 30 \\ 5 & 25 \\ 13 & 24   \end{bmatrix}
 \quad {\rm and} \quad \,\, X^{(2)} \,=\,
 \begin{bmatrix}
  b_{11} & \! b_{12} \\
  b_{21} & \! b_{22} \\
  b_{31} & \! b_{32} \end{bmatrix} \,=\,
    \begin{bmatrix}6 & 42 \\ 21 & 12 \\ 36 & 0  \end{bmatrix}.
 \end{small}
\end{equation}
The polygon $\pi_X(\Delta)$ is the pentagon whose vertices are $(a_{ij},b_{ij})$ with $\{i,j\} \neq \{2,2\}$. The payoff region $\mathcal{P}_X= \pi_X( \mathcal{V}_X \cap \Delta )$ is shaded in blue in
Figure \ref{fig: payoff region}.  The algebraic
 boundary of $\mathcal{P}_X$ is given by the two cubics 
 $9x_1^2x_2-2x_1x_2^2-162x_1^2-189x_1x_2+30x_2^2+3906x_1-540x_2+2160$ and $72x_1^2x_2-19x_1x_2^2-1512x_1^2-1614x_1x_2+390x_2^2+36288x_1-2340x_2$, plus
the two vertical lines $x_1-13$ and $x_1-24$.
The two curvy triangles that form $\mathcal{P}_X$ meet at the special point
\begin{equation}
\label{eq:specialpoint}
\bigl(22.9902299164, 16.2987107576 \bigr).
\end{equation}

Figure \ref{fig: payoff region} illustrates the general behavior for 
$3 \times 2$ games. We can understand this via the del Pezzo
geometry in Example~\ref{ex:delpezzo}.
The Spohn surface $\mathcal{V}_X$ is 
the blow-up of $\PP^1 \times \PP^1$ at six points. One of these six is the
special point (\ref{eq:specialpoint}). The
Konstanz matrix $K_X(x)$ in (\ref{eq:Konstanz32}) has rank four at this point,
so there is a line segment in $\mathcal{V}_X \cap \Delta$ that maps to (\ref{eq:specialpoint})
under $\pi_X$. At all nearby points $x\in \RR^2$, the rank of $K_X(x)$ is five. Here,
$\pi_X$ gives a bijection between $\mathcal{V}_X \cap \Delta$ and the payoff region 
$\mathcal{P}_X$.
The boundary curves of $\mathcal{P}_X$ are defined by 
maximal minors of $K_X(x)$.
Each minor is a $5 \times 5$-determinant, but it has degree four 
as a polynomial in $x = (x_1,x_2)$. That quartic factors into
a linear factor $x_1-a_{ij}$ times  a cubic  in $(x_1,x_2)$.
\hfill $\diamond$
\end{example}

We now work towards the main result of this section,
 generalizing Example \ref{ex:23generic} to arbitrary tensor formats.
The key players are the maximal minors of the Konstanz matrix $K_X(x)$.

\begin{lemma} \label{lem:minorsdegree}
Given any game $X$,
each of the $\binom{d_1 d_2 \cdots d_n}{\,d_1+d_2+ \cdots+d_n\,}$ maximal
minors of the Konstanz matrix $K_X(x)$ is a polynomial of degree at most
$\sum_{i=1}^n d_i -n + 1$ in the unknowns $x_1,\ldots,x_n$.
\end{lemma}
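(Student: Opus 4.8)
The plan is to bound the degree of each maximal minor by understanding the structure of the Konstanz matrix $K_X(x)$. Recall from the Definition that $K_X(x)$ is obtained by stacking the blocks $K_{X,1}(x_1), \ldots, K_{X,n}(x_n)$, where the $i$th block $K_{X,i}(x_i)$ has $d_i$ rows, and its entries are binary forms in $z_i$—equivalently, in the affine chart, linear forms in the single variable $x_i$ of the form $(x_i - X^{(i)}_{j_1 \cdots j_n})$ or $0$. The crucial observation is that each entry of $K_X(x)$ is linear in exactly one of the variables $x_1, \ldots, x_n$, and which variable it is depends only on which of the $n$ blocks the entry's row belongs to.

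\textbf{First I would} fix a maximal minor, indexed by a choice of $d_1 + \cdots + d_n$ columns, and expand its determinant as a sum over permutations (or via the Leibniz formula). Every term in this expansion is a product of $\sum_i d_i$ entries, one from each row. Since the rows are partitioned into $n$ blocks according to which player they encode, with the $i$th block contributing $d_i$ rows, each term is a product in which exactly $d_i$ factors come from block $i$. Each such factor is either zero or a linear form in $x_i$ alone. Therefore the $x_i$-degree of any single term is at most $d_i$, and the total degree is at most $\sum_{i=1}^n d_i$. This naive bound exceeds the claimed bound $\sum_i d_i - n + 1$ by exactly $n-1$, so the real work is to improve it.

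\textbf{The key step}, and the main obstacle, is to explain why each block contributes degree strictly less than $d_i$ after accounting for the whole determinant—or, more precisely, why the top-degree contributions cancel across the $n$ blocks so that the overall degree drops by $n-1$ rather than $0$. The cleanest route I would pursue is column reduction within each block: in block $K_{X,i}(x_i)$, every nonzero entry in a given column has the shape $x_i - (\text{const})$. I would subtract one row of block $i$ from the others within that block, so that the differences are constants (the $x_i$-terms cancel), leaving only one row per block that still carries the variable $x_i$. This mirrors the row-subtraction trick used in the proof of Theorem \ref{thm:spohn} and in the Lemma preceding (\ref{eq:cep}). After such reductions—performed simultaneously across all $n$ blocks—at most one row per block retains an $x_i$-dependence, so the minor is a sum of products in which the total number of variable-carrying factors is at most $n$; but because the $n$ retained rows are coupled through the shared columns of $P$, one further linear dependence (coming from the global constraint $p_{+\cdots+} = p_{+\cdots+}$ that relates the column-sum structure across blocks) forces the effective count down to $\sum_i d_i - n + 1$.

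\textbf{To make this rigorous} I would argue at the level of the exterior algebra: the determinant is multilinear and alternating in the rows, so after replacing each block's rows by a reduced basis (one $x_i$-linear row plus $d_i - 1$ constant rows), the degree in $x_i$ of any surviving monomial is at most $1$ per block that supplies its variable-carrying row, giving degree at most $n$ total. The final reduction from $n$ to $\sum_i d_i - n + 1$—wait, this requires reconciling the two bounds; the honest formulation is that after reduction the determinant is a polynomial whose degree is governed by how many of the $n$ distinguished rows can simultaneously appear with their linear factors in a nonvanishing term, and a rank/linear-dependence count among these rows (the constant rows eat up dimension) caps the variable-carrying factors at $\sum_i d_i - n + 1$. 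I expect the delicate point to be tracking the combinatorics of which column-subsets leave enough independent rows, and I would verify the bound is sharp on the $3 \times 2$ example (Example \ref{ex:23generic}), where $\sum_i d_i - n + 1 = 5 - 2 + 1 = 4$ matches the quartic degree of the minors observed there.
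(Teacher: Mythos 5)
Your structural setup is right (each entry of $K_X(x)$ is linear in the single variable $x_i$ attached to its row block, giving the naive bound $\sum_i d_i$), but the key step you propose to improve it fails. Within the block $K_{X,i}(x_i)$ the rows have \emph{pairwise disjoint supports}: the column indexed by $(j_1,\ldots,j_n)$ has exactly one nonzero entry in that block, namely $x_i - X^{(i)}_{j_1\cdots j_n}$ in the row $k=j_i$, as you can see directly in (\ref{eq:Konstanz32}) and (\ref{eq:Konstanz22}). Consequently, subtracting one row of a block from another never cancels the variable $x_i$; it only spreads $x_i$-carrying entries over more columns. The row-subtraction trick you cite from the proof of Theorem \ref{thm:spohn} works on the matrices $M_i$ because their rows share columns (marginals and conditional payoffs); the Konstanz blocks have no shared columns, so the trick does not transfer. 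Your intermediate conclusion --- that after reduction at most one row per block carries $x_i$, hence every maximal minor has degree at most $n$ --- is therefore not only unproven but false: for a $2\times 2$ game ($n=2$) the determinant of (\ref{eq:Konstanz22}) is a cubic (Example \ref{ex:typically}), and for $2\times 2\times 2$ games ($n=3$) all maximal minors are quartics (Example \ref{ex:222Konstanz}); in both cases the true degree is $\sum_i d_i - n + 1 > n$. Your final paragraph concedes the two bounds conflict and gestures at a ``rank/linear-dependence count,'' but that count is exactly the missing proof, and no mechanism for it is given.

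For comparison, the paper's argument makes that rank count precise without any row reduction. Write each row of a maximal submatrix as (variable part) plus (constant part): a row in block $i$ is $x_i$ times a $0/1$ row minus a row of payoff constants. Expanding the determinant multilinearly over the rows, the contribution of a subset $S$ of rows taken with their variable parts is $\prod_{r\in S} x_{i(r)}$ times a determinant that vanishes unless the $0/1$ rows indexed by $S$ are linearly independent. Hence the degree of the minor is at most the rank of the matrix obtained from $K_X(x)$ by setting all payoff entries to zero and rescaling rows, i.e.\ the $0/1$ matrix whose columns are the homogeneous coordinates of the vertices of $\Delta_{d_1-1}\times\cdots\times\Delta_{d_n-1}$. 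Since that polytope has dimension $\sum_i d_i - n$, this rank equals $\sum_i d_i - n + 1$, which is the claimed bound (and is attained in your sanity check, Example \ref{ex:23generic}). If you want to salvage your write-up, replace the intra-block row reduction by this splitting-plus-rank argument; the rest of your framing can stay.
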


\begin{proof}
The highest degree seen in the maximal minors is the rank of $K_X(x)$
after setting all entries in the payoff tables $X^{(i)}$ to zero.
After rescaling the rows, the columns of this matrix are homogeneous coordinates
for the vertices of the product of standard simplices $\Delta_{d_1-1} \times \cdots \times \Delta_{d_n-1}$.
The dimension of this polytope is one less than the matrix rank.
\end{proof}

Suppose now that $X$ is fixed and generic.
We consider the stratification of the payoff space $\RR^n$ defined by the
signs taken on by the maximal minors of $K_X(x)$. 
We call this the {\em oriented matroid stratification}
of the game $X$. Indeed, it is the restriction to $\RR^n$ of the usual
oriented matroid stratification (cf.~\cite{Mnev})
of the space of matrices with $\sum_{i=1}^n d_i$ rows and $\prod_{i=1}^n d_i$ columns.
The maximal minors of $K_X(x)$ that are nonzero polynomials give the bases
of a matroid. The full-dimensional strata correspond to orientations of that matroid. The open stratum
containing a given point $x \in \RR^n$ consists of all points $x' \in \RR^n$ such that
corresponding nonzero maximal minors of $K_X(x)$ and $K_X(x')$ have the same sign $+1$ or $-1$.

The oriented matroid strata in $\RR^n$ are semialgebraic. Their boundaries 
are delineated by the maximal minors of $K_X(x)$.
These minors are the polynomials in Lemma~\ref{lem:minorsdegree}.
The oriented matroid strata can be disconnected (cf.~\cite{Mnev}). 
This happens in Examples \ref{ex:disconnected} and~\ref{ex:23generic}.
Note that the union of
the two open curvy triangles in Figure \ref{fig: payoff region} is a single chamber
(open stratum) for the  game $X$ given in  (\ref{eq:genericbehavior}).
It is given by prescribing a fixed sign $+1$ or $-1$ for each of the six maximal minors of
(\ref{eq:Konstanz32}). Interestingly, $\mathcal{P}_X$ itself is connected
in this case. The point  (\ref{eq:specialpoint}) lies in $\mathcal{P}_X$
because its fiber under $\pi_X$ is a line that meets the interior of $\Delta$.

We now present our characterization of the payoff region $\mathcal{P}_X$ of a generic game $X$.
By the {\em algebraic boundary} of $\mathcal{P}_X$
we mean the Zariski closure of its topological boundary.

\begin{theorem} \label{thm:OM}
The payoff region $\mathcal{P}_X$ for a generic game $X$ is a union of
oriented matroid strata in $\RR^n$ that are given by the
signs of the maximal minors of the Konstanz matrix $K_X(x)$.
Its algebraic boundary is a union of irreducible
hypersurfaces of degree at most $\,\sum_{i=1}^n d_i - n + 1$.
\end{theorem}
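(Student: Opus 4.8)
The plan is to show two things: first, that $\mathcal{P}_X$ is a union of oriented matroid strata, and second, that its algebraic boundary consists of irreducible components of the claimed degree bound. I would begin with the structural observation underlying both claims. For a generic game $X$ other than the $2\times 2$ case, Theorem~\ref{thm:rational} provides the birational map $\pi_X : \mathcal{V}_X \dashrightarrow (\PP^1)^n$ whose fibers are the linear spaces $\mathrm{ker}(K_X(z))$, and the representation (\ref{eq:spohnpara}) shows that a point $x \in \RR^n$ lies in the payoff region exactly when the fiber $\mathrm{ker}(K_X(x))$ meets the open simplex $\Delta$. The key point is that whether this linear fiber intersects the positive orthant $\{P > 0\}$ is governed entirely by the signs of the maximal minors of $K_X(x)$: these signs determine the oriented matroid of the column configuration, and by a standard Farkas/Gale-duality argument the feasibility of the positivity constraints on $\mathrm{ker}(K_X(x))$ depends only on that oriented matroid data. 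Hence two points $x, x'$ lying in the same oriented matroid stratum have fibers with the same positivity behavior, so $\mathcal{P}_X$ is a union of such strata.

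For the first assertion I would therefore make precise the dependence of ``$\mathrm{ker}(K_X(x)) \cap \Delta \neq \emptyset$'' on the sign vector of the maximal minors. I would fix a generic $x$ where $K_X(x)$ has full row rank $\sum_i d_i$, parametrize the kernel, and express the condition $P > 0$ as a linear feasibility problem whose solvability is an oriented-matroid invariant (the circuits and cocircuits of the kernel are read off from the signs of the maximal minors, via Cramer's rule the coordinates of $P$ are themselves maximal minors up to sign). This shows membership in $\mathcal{P}_X$ is constant on each open stratum, giving the union-of-strata description. The boundary of $\mathcal{P}_X$ then lies in the locus where some maximal minor vanishes, i.e.\ in the arrangement cut out by the minors.

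For the second assertion, the algebraic boundary is contained in the union of the zero sets of the maximal minors of $K_X(x)$. By Lemma~\ref{lem:minorsdegree}, each such minor has degree at most $\sum_{i=1}^n d_i - n + 1$ in $x$, so each irreducible component of the algebraic boundary, being an irreducible factor of one of these minors, is a hypersurface of degree at most that bound. I would note, as in Example~\ref{ex:23generic}, that a given minor may factor (e.g.\ peeling off a linear factor $x_1 - a_{ij}$), and one simply takes irreducible components; the degree bound is inherited by every factor. One should also argue that the topological boundary is genuinely contained in the minor arrangement rather than arising spuriously: near a stratum-crossing, the positivity of some Cramer coordinate of $P$ changes sign, which forces the corresponding minor to vanish, so the true boundary walls are among these hypersurfaces.

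The main obstacle I anticipate is the first step: cleanly establishing that the feasibility of $\mathrm{ker}(K_X(x)) \cap \Delta$ is determined by the oriented matroid of $K_X(x)$, and not by finer metric data. The subtlety is that the linear subspace $\mathrm{ker}(K_X(x))$ varies with $x$, so one must verify that its realized oriented matroid (equivalently, the sign data of all maximal minors, which by Gale duality encodes the oriented matroid of the kernel) is exactly what controls whether the positivity cone is nonempty. This is where I would invoke the oriented-matroid framework of Mnëv cited in the text, identifying the restriction to $\RR^n$ of the ambient stratification and checking that the positivity predicate is constant on strata; the rest of the argument is then a direct application of Lemma~\ref{lem:minorsdegree}.
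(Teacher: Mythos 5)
Your proposal is correct and takes essentially the same route as the paper: both identify the fiber of the payoff map over $x$ with the polytope $\ker(K_X(x)) \cap \Delta$ as in (\ref{eq:polytopefiber}), argue that its nonemptiness depends only on the oriented matroid encoded by the signs of the maximal minors (the paper invokes \cite[Chapter 9]{BLSWZ} and the positive-covector criterion where you appeal to Gale/Farkas duality, which is the same theory), and obtain the degree bound on the boundary hypersurfaces from Lemma~\ref{lem:minorsdegree} together with the observation that irreducible components are factors of the minors.
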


\begin{proof} For fixed $x \in \RR^n$, the set of probability tensors $P$ with 
expected payoffs $x$ is equal to
\begin{equation}
\label{eq:polytopefiber}
 {\rm kernel}\bigl(K_X(x)\bigr) \, \, \cap \,\, \Delta . 
 \end{equation}
 This is a convex polytope which is either empty or has the full
 dimension $\prod_{i=1}^n d_i - \sum_{i=1}^n d_i - 1$.
 The payoff region $\mathcal{P}_X$ is the set of
 all $x \in \RR^n$ such that this polytope is nonempty.
 We know from oriented matroid theory \cite[Chapter 9]{BLSWZ}
 that the combinatorial type of the polytope (\ref{eq:polytopefiber})
is determined by the oriented matroid of the matrix $K_X(x)$. 
Therefore, the combinatorial type is constant as $x$ ranges
over a fixed oriented matroid stratum in $\RR^n$.
In particular, whether or not (\ref{eq:polytopefiber}) is empty 
depends only on the oriented matroid of $K_X(x)$. Namely,
it is non-empty if and only if every column index lies in a positive
covector of that oriented matroid. This proves the first
sentence. The second sentence follows from 
 Lemma \ref{lem:minorsdegree}.
\end{proof}

One of the reasons for our interest in the algebraic boundary is that it
helps in characterizing dependency equilibria $P$ that are Pareto optimal. We thus 
address a question raised in \cite[Section 4]{spohn}.
Recall that $P$ is  {\em Pareto optimal} if its image $x = \pi_X(P)$ in $\mathcal{P}_X$ satisfies
$\,(x + \RR^n_{\geq 0})\, \cap \,\overline{\mathcal{P}_X} = \{x\}$.
This condition implies that $x$ lies in the boundary of $\mathcal{P}_X$, hence
one of the maximal minors of $K_X(x)$ must vanish. For instance,
for the $3 \times 2$ game in Example~\ref{ex:23generic}, the Pareto optimal equilibria
correspond to the points on the upper-right boundaries of the two curvy triangles
in Figure \ref{fig: payoff region}. At such points $x$, the product of our two cubics~vanishes.

We close this section by  discussing
Theorem \ref{thm:OM} for two cases larger than Example~\ref{ex:23generic}.

\begin{example}[$3\times3$ games] \label{ex:33Konstanz}
Let $n=2$ and $d_1=d_2 = 3$. The Konstanz matrix equals
$$
 K_X(x)  \,=\,  \begin{small}
\begin{bmatrix}
     x_1 \!-\! a_{11} & \! x_1 \!-\! a_{12} \! &\! x_1\! -\! a_{13} \!& 0 & 0 & 0 & 0 & 0 & 0 \\
   0 & 0 & 0 & x_1 \!-\! a_{21} & \! x_1 \!-\! a_{22}\! & \! x_1 \!-\! a_{23}\! & 0 & 0 & 0 \\
   0 & 0 & 0 & 0 & 0 & 0 & \! x_1 \!-\! a_{31}\!  & \! x_1 \!-\! a_{32}\! & \!x_1 \!-\! a_{33} \\
   x_2 \!-\! b_{11} & 0 & 0 & \! x_2 \!-\! b_{21} & 0 & 0 &\! x_2 \!-\! b_{31} & 0 & 0 \\
   0 & \! x_2 \!-\! b_{12} \! & 0 & 0 & \! x_2 \!-\! b_{22} \! & 0 & 0 & \! x_2\! -\! b_{32} \! & 0 \\
   0 & 0 & \! x_2 \!-\! b_{13}\!  & 0 & 0 & \! x_2 \!-\! b_{23} & 0 & 0 & \! x_2 \! - \! b_{33} 
\end{bmatrix}\!.
\end{small}
$$
Among the $\binom{9}{6} = 84$ maximal minors of this $6 \times 9$ matrix,
six are identically zero. Six others are irreducible polynomials of degree five in $x=(x_1,x_2)$.
Each of the remaining $72$ minors is an irreducible cubic times a product $(x_1-a_{ij})(x_2-b_{kl})$.
The resulting arrangement~of lines, cubics and quintics divides the plane $\RR^2$ into 
open chambers. We examine the chambers that lie inside
the polygon $\pi_X(\Delta)$. The rank $6$ oriented matroid of $K_X(x)$, given by $78$ signed bases,
 is constant on each chamber.  The payoff region is a union of some of them.
 \hfill $\diamond$
\end{example}

\begin{example}[$2\times 2 \times 2$ games] \label{ex:222Konstanz}
The game played by Adam, Bob and Carl in Example \ref{ex:twotwotwo}~has
$$
 K_X(x)  \,=\,  \begin{small}
\begin{bmatrix}
  x_1 \!-\! a_{111} & x_1 \!-\! a_{112} & x_1 \!-\! a_{121} & x_1 \!-\! a_{122} & 0 & 0 & 0 & 0 \\
0 & 0 & 0 & 0 & x_1 \!-\! a_{211} & x_1 \!-\! a_{212} & x_1 \!-\! a_{221} & x_1 \!-\! a_{222} \\
x_2 \!-\! b_{111} & x_2 \!-\! b_{112} & 0 & 0 & x_2 \!-\! b_{211} & x_2 \!-\! b_{212} & 0 & 0 \\
0 & 0 & x_2 \!-\! b_{121} & x_2 \!-\! b_{122} & 0 & 0 & x_2 \!-\! b_{221} & x_2 \!-\! b_{222} \\
x_3 \!-\! c_{111} & 0 & x_3 \!-\! c_{121} & 0 & x_3 \!-\! c_{211} & 0 & x_3 \!-\! c_{221} & 0 \\
0 & x_3 \!-\! c_{112} & 0 & x_3 \!-\! c_{122} & 0 & x_3 \!-\! c_{212} & 0 & x_3 \!-\! c_{222} 
\end{bmatrix}\! .
\end{small}
$$
All $\binom{8}{6} = 28$ maximal minors are irreducible polynomials of degree four in $x = (x_1,x_2,x_3)$.
Each of them defines a smooth quartic surface in $\CC^3$ that has three isolated singularities at
infinity in $\PP^3$. This data specifies 
an arrangement of $28$ K3 surfaces in $\PP^3$.
We examine its chambers inside
the polytope  $\pi_X(\Delta)$, which has $\leq 8$ vertices. The payoff region $\mathcal{P}_X$ is the union
of a subset of these chambers, so its algebraic boundary consists of quartic surfaces.
\hfill $\diamond$
\end{example}

\section{Conditional Independence and Bayesian Networks}
\label{sec6}

One drawback of dependency equilibria is that they are abundant.
Indeed, if the Spohn variety $\mathcal{V}_X$ 
intersects the open simplex $\Delta$, then the semialgebraic set 
$\mathcal{V}_X \cap \Delta$ of all dependency equilibria
has dimension $\prod_{i=1}^n d_i - \sum_{j=1}^n d_j + n - 1$.
This follows from Theorem~\ref{thm:spohn}.
To mitigate this drawback, we restrict to intersections of $\mathcal{V}_X$
with statistical models in $\Delta$. Natural candidates are
 the conditional independence models in 
 \cite[Section 8.1]{CBMS} and \cite[Section 4.1]{Sul}.

We view the $n$ players as random variables
with state spaces $[d_1],\ldots,[d_n]$. A point $P$ 
in $\Delta$ is a joint probability distribution.
Let $\mathcal{C}$ be any collection of conditional
independence (CI) statements on $[n]$. These statements have the form $A \! \perp \!\!\! \perp  \! B \,|\, C$, where
$A,B,C$ are pairwise disjoint subsets of $[n]$.  Each CI statement
  translates into a system of homogeneous quadratic constraints
in the tensor entries $p_{j_1 j_2 \cdots j_n}$.
This translation is explained in \cite[Proposition 8.1]{CBMS} 
and \cite[Proposition 4.1.6]{Sul}.
We write $\mathcal{M}_\mathcal{C}$ for the
projective variety in $\PP(V)$ that is defined by
these quadrics, arising from all statements 
$\,A \! \perp \!\!\! \perp \! B \,|\, C\,$ in $\mathcal{C}$.
Here we assume that components lying in the hyperplanes
$\{p_{j_1j_2 \cdots j_n}=0\}$ and $\{p_{++ \cdots +} = 0\}$ have been removed.

Suppose $X$ is any game in normal form, and $\mathcal{C}$ is any collection
of CI statements.
We define the {\em Spohn CI variety} to be the intersection
of the Spohn variety with the CI model:
\begin{equation}
\label{eq:spohnCI} \mathcal{V}_{X,\mathcal{C}} \,\, = \,\,
\mathcal{V}_X \, \cap \, \mathcal{M}_\mathcal{C}. 
\end{equation}
We again assume that components lying in the special hyperplanes above have been removed.
The intersection $\,\mathcal{V}_{X,\mathcal{C}} \,\cap \, \Delta\,$
with the simplex $\Delta$ is the set of all
 {\em CI equilibria} of the game $X$. 
 This is a semialgebraic set which is a natural extension of the
 set of Nash equilibria of $X$. 
  In what follows we assume that all random variables
 are binary, i.e.~$d_1=d_2 = \cdots = d_n = 2$.

  \begin{example}[Nash points] \label{ex:nashpoints}
 Let $\mathcal{C}$ be the set of all CI statements on $[n]$.
  The model $\mathcal{M}_\mathcal{C}$ is the Segre variety of rank one tensors, and
  the Spohn CI variety     (\ref{eq:spohnCI}) is the set of all Nash points in the Spohn variety $\mathcal{V}_X$.
  By \cite[Corollary 6.9]{CBMS}, this variety is finite, and its cardinality is the number of derangements of $[n]$,
  which is $1,2,9,44,265,\ldots$ for $n=1,2,3,4,5,\ldots$. 
  
  For $n \geq 3$, the Nash points span a linear subspace of codimension $2n$ in
$\PP(V) \simeq \PP^{2^n-1}$. To see this, we note that the
$i$th multilinear equation in \cite[Theorem 6.6]{CBMS} has degree $n-1$ and it
misses the $i$th unknown $\pi^{(i)}$.
Multiplying that equation by $\pi^{(i)}$ and by $1-\pi^{(i)}$
 gives two linear constraints on $\PP(V)$ for each $i$.
These $2n$ linear forms are linearly independent.
\hfill $\diamond$
 \end{example}

\begin{example}[$n=3, d_1=d_2=d_3=2$] \label{ex:3222}
Consider  games $X$ for three players with binary choices.
The Spohn variety $\mathcal{V}_X$ is a complete intersection
of dimension $4$ and degree $8$ in $\PP^7$. It is defined
by imposing rank one constraints on the three matrices $M_i$ in Example~\ref{ex:twotwotwo}.
It is parametrized by the lines ${\rm ker}(K_X(x))$
where $x \in \CC^3$ and $K_X(x)$ is the matrix in Example~\ref{ex:222Konstanz}.

We examine the Spohn CI varieties given by three models $\mathcal{M}_\mathcal{C}$
 in \cite[Section 8.1]{CBMS}. In each case, the intersection (\ref{eq:spohnCI})
is transversal in $\Delta$, and we find that $\mathcal{V}_{X,\mathcal{C}}$ is irreducible in~$\PP^7$.
\begin{itemize}
\item[(a)] Let $\mathcal{C} = \{\,1 \! \perp\!\!\! \perp \! 2 \,|\, 3\}$ as in \cite[eqn (8.3)]{CBMS}.
The CI model $\mathcal{M}_\mathcal{C}$ has codimension $2$ and degree $4$,
and the Spohn CI variety $\mathcal{V}_{X,\mathcal{C}}$ is a surface of degree
$28$ in $\PP^7$. We find that the prime ideal of $\mathcal{V}_{X,\mathcal{C}}$
 is minimally generated by five quadrics and three quartics.
\item[(b)] Let $\mathcal{C} = \{\,2 \! \perp\!\!\! \perp \! 3 \,\}$ as in \cite[eqn (8.4)]{CBMS},
so here $C = \emptyset$. 
The CI model $\mathcal{M}_\mathcal{C}$ is the hypersurface,
defined by the quadric $p_{+11} p_{+22} - p_{+12} p_{+21}$.
The Spohn CI variety $\mathcal{V}_{X,\mathcal{C}}$ is a threefold of degree
$10$ in~$\PP^7$. 
 Its prime ideal  is minimally generated by six quadrics.
\item[(c)]
Let $\mathcal{C} = \{\,1 \! \perp\!\!\! \perp \! 23 \,\}$ as in
\cite[eqn (8.5)]{CBMS}.
Here $\mathcal{M}_\mathcal{C} \simeq \PP^1 \times \PP^3$ is
defined by the $2 \times 2$ minors of a $2 \times 4$ matrix obtained by
flattening the tensor $P$. The  Spohn CI variety~$\mathcal{V}_{X,\mathcal{C}}$ 
is a curve of degree $8$ and genus $3$. It lies in a $\PP^5 $ inside $\PP^7$.
  Its prime ideal  is  generated by two linear forms and seven quadrics.
  These will be explained after Example \ref{ex:fifteen}.
  \end{itemize}
The computation of the prime ideals is non-trivial.
One starts with the ideal generated by the natural quadrics 
defining (\ref{eq:spohnCI}), and one then saturates that ideal by
$\,p_{+++} \cdot \prod_{i,j,k=1}^2  p_{ijk} $.
We performed these computations with the
computer algebra system {\tt Macaulay2} \cite{M2}.
\hfill $\diamond$
\end{example}

Of special interest are graphical models,
such as Markov random fields and Bayesian networks.
These allow us to describe the nature of the
desired equilibria by means of a graph whose
nodes are the $n$ players. This is different from
the setting of graphical games in \cite[Section 6.5]{CBMS},
where the graph structure imposes zero patterns in the
payoff tables $X^{(i)}$.

Inspired by \cite[Section 3]{homo}, we now focus on Bayesian networks, where
the CI statements
 $\mathcal{C}$ describe the
global Markov property of an acyclic directed graph with vertex set $[n]$.
These CI statements and their ideals are explained in
\cite[Section 3]{GSS}. In {\tt Macaulay2}, they can be 
computed using the commands {\tt globalMarkov} and {\tt conditionalIndependenceIdeal}
in the {\tt GraphicalModels} package.
Sometimes, it is preferable to work with the
prime ideal ${\rm ker}(\Phi)$ in \cite[Theorem 8]{GSS}.
From this we obtain the ideal of the
Spohn CI variety $\mathcal{V}_{X,\mathcal{C}}$ by
saturation, as described at the end of Example \ref{ex:3222}.
For all the models  we were able to compute, this ideal turned out to be of the expected codimension. 
In each case, except for the network with no edges, the variety $\mathcal{V}_{X,\mathcal{C}}$ is irreducible. We conjecture that these facts hold in general.

\begin{conjecture} \label{conj:smallbayes}
For every Bayesian network $\mathcal{C}$
on $n$ binary random variables, the Spohn CI variety $\,\mathcal{V}_{X,\mathcal{C}}\,$
 has the expected codimension $n$
inside the model $\mathcal{M}_\mathcal{C}$ in   $\PP^{2^n-1}$.
  The variety $\mathcal{V}_{X,\mathcal{C}}$ is positive-dimensional
  and irreducible whenever the network has at least one~edge.
  \end{conjecture}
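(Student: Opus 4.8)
```latex
The plan is to prove Conjecture \ref{conj:smallbayes} by reducing the
global statement to a local, generic intersection-theoretic computation on
the model $\mathcal{M}_\mathcal{C}$. The central dictionary is already in
place: the Spohn variety $\mathcal{V}_X$ is cut out of $\PP^{2^n-1}$ by the
$\sum_i \binom{d_i}{2} = n$ quadrics (one per player, since $d_i=2$) given by
$\det(M_i)$, and $\mathcal{V}_{X,\mathcal{C}} = \mathcal{V}_X \cap
\mathcal{M}_\mathcal{C}$ by definition (\ref{eq:spohnCI}). By
Theorem~\ref{thm:spohn} the $n$ Spohn quadrics meet transversally when $X$ is
generic, so $\mathcal{V}_X$ is a complete intersection of codimension $n$. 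The
expected codimension of $\mathcal{V}_{X,\mathcal{C}}$ inside
$\mathcal{M}_\mathcal{C}$ is therefore $n$, and the heart of the proof is to
show that, for generic payoff tables $X$, these $n$ additional quadrics remain
a regular sequence when restricted to $\mathcal{M}_\mathcal{C}$.

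First I would use the rational parametrization of $\mathcal{M}_\mathcal{C}$
coming from a Bayesian network, namely the map $\Phi$ of
\cite[Theorem~8]{GSS}, whose image is the model and whose coordinates are the
marginal/conditional probabilities attached to the directed graph. Pulling the
Spohn quadrics $\det(M_i)$ back along $\Phi$ turns the intersection problem on
$\mathcal{M}_\mathcal{C}$ into a problem about $n$ hypersurfaces in the
parameter space of $\Phi$. Because the entries of $X^{(i)}$ enter $\det(M_i)$
linearly (the coefficients of the quadric depend linearly on $X$, as noted in
the proof of Theorem~\ref{thm:rational}) and the $i$th payoff table appears
\emph{only} in $M_i$, I can vary the $X^{(i)}$ independently. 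A standard
Bertini--Sard argument then shows that for generic $X$ the pulled-back quadrics
cut out a scheme of the expected dimension: transversality is an open,
nonempty condition, and independence of the parameters across the $n$ quadrics
guarantees the generic fibre is smooth of codimension $n$ wherever it is
nonempty. This yields the codimension claim.

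For the dimension and irreducibility claims, I would argue that when the
network has at least one edge the model $\mathcal{M}_\mathcal{C}$ has dimension
strictly greater than $n$ (a Bayesian network with an edge has at least one
free conditional parameter beyond the $n$ node-marginals, so
$\dim \mathcal{M}_\mathcal{C} > n$), hence the expected-codimension variety is
positive-dimensional. Irreducibility I would obtain by showing that the
pulled-back quadrics define an irreducible variety in the parameter space. The
cleanest route is to exhibit $\mathcal{V}_{X,\mathcal{C}}$ as dominated by, or
fibred over, an irreducible base: for a generic $x$ in payoff space the fibre
of $\pi_X$ is the linear space $\ker(K_X(x)) \cap \mathcal{M}_\mathcal{C}$
from (\ref{eq:spohnpara}), intersected with the model, and I would show this
fibre is irreducible (a linear section of an irreducible model) and varies in
an irreducible family over the base $(\PP^1)^n$, so that the total space is
irreducible. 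The network-with-no-edges case is genuinely excluded here,
matching Example~\ref{ex:nashpoints}, where $\mathcal{M}_\mathcal{C}$ is the
Segre variety and $\mathcal{V}_{X,\mathcal{C}}$ is the \emph{finite,
reducible} set of Nash points.

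The hard part will be establishing irreducibility uniformly over all Bayesian
networks, rather than verifying it network by network as was done
computationally in the examples. The Bertini argument controls generic
smoothness and codimension, but smoothness alone does not give connectedness
once the variety is positive-dimensional; I would need either a degeneration
to a known irreducible variety or a monodromy/connectedness argument on the
family of linear sections. A subtle point is the saturation at the end of
Example~\ref{ex:3222}: the naive quadrics define a scheme with extraneous
components on the coordinate and $p_{+\cdots+}=0$ hyperplanes, and I must check
that removing these does not alter the codimension or create reducibility —
i.e.\ that the saturated ideal is prime of the expected codimension. Handling
this removal-of-boundary-components step for an arbitrary network, where the CI
ideal structure from \cite[Section~3]{GSS} can be combinatorially intricate,
is where I expect the real difficulty to lie, and it is plausibly what
prevents the authors from stating this as a theorem rather than a conjecture.
```
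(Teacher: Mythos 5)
You are attempting to prove what the paper itself states only as a conjecture: the paper contains no proof of it, proves only the cases $n \leq 3$ (by using \cite[Proposition 5]{GSS} to reduce to the three models of Example \ref{ex:3222}, whose saturated prime ideals are verified computationally in {\tt Macaulay2}), and explicitly reports that the $n=4$ computations were left for the future. So the test for your proposal is whether it supplies what the authors could not, and it does not. The first concrete gap is nonemptiness. Your Bertini argument, done carefully, yields only this: the base locus of the linear system $\{\det(M_i)\}$ (as $X^{(i)}$ varies) is nonempty and is contained in the union of the coordinate hyperplanes $\{p_{j_1\cdots j_n}=0\}$ and $\{p_{+\cdots+}=0\}$, so every component of $\mathcal{V}_X \cap \mathcal{M}_\mathcal{C}$ \emph{not} contained in those hyperplanes has codimension exactly $n$ in $\mathcal{M}_\mathcal{C}$. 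After the saturation step this gives ``codimension $n$ or empty.'' Nothing in your argument excludes ``empty'': your inference that $\dim \mathcal{M}_\mathcal{C} > n$ forces $\mathcal{V}_{X,\mathcal{C}}$ to be positive-dimensional presupposes that the saturated variety is nonempty, which you never establish (your own phrasing ``wherever it is nonempty'' flags this). A repair is available from the paper's setup --- every CI model contains the Segre variety, hence $\mathcal{V}_{X,\mathcal{C}}$ contains the Nash points of Example \ref{ex:nashpoints}, which for generic $X$ avoid the boundary hyperplanes and are nonzero in number --- but this step is absent from your proposal.

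The second and more serious gap is that your irreducibility argument is not merely incomplete (as you concede) but fails as stated. You propose to fibre $\mathcal{V}_{X,\mathcal{C}}$ over $(\PP^1)^n$ with fibres $\ker(K_X(x)) \cap \mathcal{M}_\mathcal{C}$. But $\ker(K_X(x))$ has projective dimension $2^n - 2n - 1$ for generic $x$, so for a model of dimension $m$ the generic such intersection has dimension $m - 2n$, which is negative unless $m \geq 2n$; for the one-edge model ($m = n+1$) the fibre over a generic point of $(\PP^1)^n$ is \emph{empty}, and $\mathcal{V}_{X,\mathcal{C}}$ is a curve mapping onto a proper curve in $(\PP^1)^n$. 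There is therefore no ``irreducible family over the base $(\PP^1)^n$'': the true base is the image $\pi_X(\mathcal{V}_{X,\mathcal{C}})$, whose irreducibility is essentially the statement you are trying to prove, and the nonempty fibres are sections of $\mathcal{M}_\mathcal{C}$ by \emph{special}, non-generic linear spaces, to which neither Bertini nor the (false in this generality) claim ``a linear section of an irreducible variety is irreducible'' applies. Since the two points you defer --- uniform irreducibility and control of the saturation --- are exactly the content of the conjecture, your text is a reasonable plan of attack but not a proof; for comparison, the paper's reference \cite{portakalarranz} settles precisely the one-edge case by a separate, model-specific analysis.
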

  
  \begin{proposition} Conjecture \ref{conj:smallbayes} holds for $n \leq 3$.   \end{proposition}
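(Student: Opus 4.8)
The plan is to reduce the claim to a finite verification. For $n \le 3$ there are only finitely many acyclic digraphs on $[n]$, and $\mathcal{V}_{X,\mathcal{C}}$ depends on the network only through its global Markov CI statements, hence only through the Markov equivalence class of the digraph. Permuting the players is a linear automorphism of $\PP(V)$ that carries $\mathcal{V}_X$ and $\mathcal{M}_\mathcal{C}$ to the corresponding objects for the relabeled game, so it suffices to treat one representative of each class up to relabeling. For $n \le 2$ the edgeless graphs give the Nash points of Example \ref{ex:nashpoints} (finite, hence of codimension $n$), and the single edge available when $n=2$ imposes no constraint, so that $\mathcal{V}_{X,\mathcal{C}} = \mathcal{V}_X$ is the elliptic curve of Theorem \ref{thm:rational}; this settles $n \le 2$.

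For $n=3$ the digraphs fall into five classes up to relabeling and Markov equivalence: the edgeless graph; one edge; the two-edge chain or fork (both giving $a \perp c \mid b$); the two-edge v-structure (giving the marginal $a \perp c$); and the complete digraph (no CI statements). I would identify these with objects already in the paper: the edgeless graph yields the nine Nash points of Example \ref{ex:nashpoints}, the complete digraph yields the entire Spohn fourfold of Theorem \ref{thm:spohn}, and the three intermediate classes are precisely cases (c), (a) and (b) of Example \ref{ex:3222}. In every case with an edge the cited computation records that $\mathcal{V}_{X,\mathcal{C}}$ is irreducible and positive-dimensional, and comparing its dimension with $\dim \mathcal{M}_\mathcal{C}$ gives codimension $3 = n$ throughout.

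It remains to upgrade these statements from the specific games of Examples \ref{ex:nashpoints} and \ref{ex:3222} to all generic $X$. The codimension is the easy half. Since all variables are binary, each matrix $M_i$ is $2 \times 2$ and contributes a single quadric, so $\mathcal{V}_{X,\mathcal{C}}$ is cut out inside $\mathcal{M}_\mathcal{C}$ by exactly $n$ equations; Krull's height theorem then forces every component to have codimension at most $n$ in $\mathcal{M}_\mathcal{C}$. On the other hand, the fiber dimension of the family $\{\mathcal{V}_{X,\mathcal{C}}\}_X$ is upper semicontinuous in $X$, so its generic value equals the minimum over all $X$; a single witness attaining $\dim \mathcal{M}_\mathcal{C} - n$ therefore forces the generic fiber to have exactly that dimension, i.e.\ codimension $n$.

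The harder half is generic irreducibility, which is not literally an open condition. The clean route is to compute the ideal of $\mathcal{V}_{X,\mathcal{C}}$ over the function field $\QQ(a,b,c)$ generated by all payoff entries and to verify that it is prime with geometrically integral quotient; this exhibits the geometric generic fiber of the family as irreducible, and irreducibility of the geometric generic fiber spreads out to a dense open set of games. I expect this symbolic verification -- not the enumeration or the dimension count -- to be the main obstacle: the ideals in Example \ref{ex:3222} are already of substantial size, and certifying \emph{geometric} rather than merely ordinary primality over the parameter field is delicate. In practice one either carries out the Gr\"obner computation symbolically over $\QQ(a,b,c)$, or combines a rational witness with a generic-smoothness argument to exclude a splitting of the geometric generic fiber.
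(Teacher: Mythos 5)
Your proposal follows essentially the same route as the paper's proof: the paper likewise disposes of the edgeless network via Example \ref{ex:nashpoints}, notes the case $n\leq 2$ is trivial, identifies the complete DAG with $\mathcal{V}_{X,\mathcal{C}}=\mathcal{V}_X$ (Theorem \ref{thm:spohn}), and reduces the remaining three-node networks --- via the classification up to Markov equivalence, citing \cite[Proposition 5]{GSS} --- to exactly the three models (a), (b), (c) computed in Example \ref{ex:3222}. The only divergence is your final two paragraphs: the paper treats the Macaulay2 computations of Example \ref{ex:3222} as already establishing the statement for generic $X$, so the semicontinuity and function-field/spreading-out arguments you sketch (correctly, though you leave the symbolic verification unexecuted) are precisely the step the paper leaves implicit in those computations rather than a genuinely different argument.
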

  
\begin{proof}
For the network with no edges, $\mathcal{M}_\mathcal{C}$ is the
Segre variety $(\PP^1)^n$. The dimension statement holds, but the
Spohn CI variety is reducible, as seen in Example~\ref{ex:nashpoints}.
We thus examine all Bayesian networks with at least one edge. These satisfy
${\rm dim}(\mathcal{M}_\mathcal{C}) \geq n+1$.
The case $n \leq 2$ being trivial, we assume that $n=3$. If the network is a complete directed acylic graph, then the ideal of $\mathcal{M}_\mathcal{C}$ is the zero ideal and $\mathcal{V}_{X,\mathcal{C}} = \mathcal{V}_X$.
There are four networks left to be considered. By \cite[Proposition 5]{GSS},
 they are precisely the three models in Example~\ref{ex:3222}:
$$ \hbox{(a) $\,\,1 \leftarrow 3 \rightarrow 2\,\,$ or
 $\,\,2 \rightarrow 3 \rightarrow 1$ \qquad \quad
 (b) $\,\,3 \rightarrow 1 \leftarrow 2$ \qquad \quad
(c) $\,\,3 \rightarrow 2 \quad 1$.
} $$
This means that the proof was already given by our analysis in Example~\ref{ex:3222}.
\end{proof}

Consider the next case $n=4$. Up to relabeling, there are $29$ Bayesian networks $\mathcal{C}$ with at least one edge.
They are listed in \cite[Theorem 11]{GSS}, along with a detailed analysis  of the variety
 $\mathcal{M}_\mathcal{C}$ in each case. We embarked 
towards a proof of Conjecture \ref{conj:smallbayes}, by examining
all  $29$ models. But the computations
are quite challenging, and we leave them for the future.

\begin{example} \label{ex:fifteen}
Consider the network \#15 in \cite[Table 1]{GSS}. 
The variety $\mathcal{M}_\mathcal{C}$ has dimension $9$ and degree $48$.
An explicit parametrization $\phi$ is shown in \cite[page 109]{CBMS}.
We can represent $\mathcal{V}_{X,\mathcal{C}}$ by
substituting this parametrization  into the equations
${\rm det}(M_i) = 0$ for $i=1,2,3,4$.
\hfill $\diamond$
\end{example}

The smallest irreducible variety in Conjecture \ref{conj:smallbayes} 
arises from the Bayesian network $\mathcal{C}$ with only one edge, here taken to be  $n \rightarrow n-1$.
The Spohn CI variety $\mathcal{V}_{X,\mathcal{C}}$  contains all the Nash points
in Example  \ref{ex:nashpoints}.
The rest of this paper is dedicated to this scenario.
It is important for applications of
dependency equilibria because of its proximity to
Nash equilibria.

For our one-edge network,
 $\mathcal{M}_\mathcal{C} $ is the Segre variety 
$ (\PP^1)^{n-2} \times \PP^3$ embedded into $\PP^{2^n-1}$.
Hence $\mathcal{M}_\mathcal{C}$ has dimension $n+1$.
The Spohn CI variety  $\mathcal{V}_{X,\mathcal{C}}$ is a curve.
This curve  lies in a linear subspace of codimension  $2n-4$ in
$\PP^{2^n-1}$.  In addition to the quadrics that define the Segre variety  $\mathcal{M}_\mathcal{C} $,
the ideal of  $\mathcal{V}_{X,\mathcal{C}}$ contains
$2n-4$ linear forms and $2^{n-1}$ quadrics 
that depend on the game $X$.  The determinants of the matrices
$M_1,M_2,\ldots,M_{n-2}$ give rise to two linear forms each.
The determinants of the matrices $M_{n-1}$ or $ M_{n}$ give rise to
$2^{n-2}$ quadrics.

For example, if $n=3$ then the variety $\mathcal{M}_\mathcal{C} \simeq \PP^1 \times \PP^3$
has the parametric representation
$$ p_{ijk} \,\,= \,\,\sigma_i \tau_{jk} \quad \hbox{for} \,\,\,1 \leq i,j,k \leq 2. $$
The prime ideal of $\mathcal{M}_\mathcal{C}$  is generated by the six $2 \times 2 $ minors of the matrix
\begin{equation}
\label{eq:2by4} 
\begin{bmatrix} 
\,p_{111}  & p_{112} & p_{121} & p_{122} \\
\,p_{211}  & p_{212} & p_{221} & p_{222} 
\end{bmatrix}.
\end{equation}
After removing common factors from rows and columns, the three matrices 
in Example \ref{ex:twotwotwo}~are
$$ M_1 \,= \, \begin{bmatrix}
 \, \,\, 1 \, &\, \, a_{111} \tau_{11} + a_{112} \tau_{12} + a_{121} \tau_{21} + a_{122} \tau_{22} \\
\,  \,\, 1 \, &\,\,  a_{211} \tau_{11} + a_{212} \tau_{12} + a_{221} \tau_{21} + a_{222} \tau_{22}
\end{bmatrix},
$$
$$
M_2 \,= \,\begin{bmatrix}
   \,  \tau_{11} + \tau_{12}  &\,   b_{111} \sigma_{1} \tau_{11} 
        + b_{112} \sigma_{1} \tau_{12} + b_{211} \sigma_{2} \tau_{11} + b_{212} \sigma_{2} \tau_{12} \\
   \,  \tau_{21} + \tau_{22} &\,   b_{121} \sigma_{1} \tau_{21}
       + b_{122} \sigma_{1} \tau_{22} + b_{221} \sigma_{2} \tau_{21} + b_{222} \sigma_{2} \tau_{22}
\end{bmatrix},
$$      
$$
M_3 \,= \,\begin{bmatrix}
   \, \tau_{11} + \tau_{21} &\,   c_{111} \sigma_{1} \tau_{11} + c_{121} \sigma_{1} \tau_{21} 
                                        + c_{211} \sigma_{2} \tau_{11} + c_{221} \sigma_{2} \tau_{21} \\
  \, \tau_{12} + \tau_{22} & \,  c_{112} \sigma_{1} \tau_{12} + c_{122} \sigma_{1} \tau_{22}
                                       + c_{212} \sigma_{2} \tau_{12} + c_{222} \sigma_{2} \tau_{22}
\end{bmatrix}.
$$
By multiplying ${\rm det}(M_1)$ with $\sigma_1$ and with $\sigma_2$,
we obtain two linear forms in $p_{111},p_{112},\ldots,p_{222}$
that vanish on $\mathcal{V}_X$. Likewise, by multiplying 
${\rm det}(M_2)$ and ${\rm det}(M_3)$    with $\sigma_1$ and with $\sigma_2$,
we obtain four quadratic forms in $p_{111},p_{112},\ldots,p_{222}$
that vanish on $\mathcal{V}_X$.  Three of the six minors of (\ref{eq:2by4})
are linearly independent modulo the linear forms. This explains the
$2+7$ generators of the prime ideal of the curve $\mathcal{V}_{X,\mathcal{C}}$,
which has genus $3$ and degree $8$ in $\PP^5 \subset \PP^7$.

Let now $n=4$. The one-edge model
 $\mathcal{M}_\mathcal{C} $ is the Segre variety $\PP^1 \times \PP^1 \times \PP^3$ in 
$\PP^{15}$. Its prime ideal is generated by $46$ binomial quadrics.
Of these, $32$ are linearly independent modulo the four linear forms
that arise from the matrices $M_1$ and $M_2$
as above. Similarly,  $M_3$ and $M_4$ contribute eight quadrics.
 We conclude that $\mathcal{V}_{X,\mathcal{C}}$ is an
  curve of genus $23$ and degree $30$ in
$\PP^{11} \subset \PP^{15}$, and its prime ideal is minimally
generated by $4$ linear forms and $40$ quadrics.

In the recent work \cite{portakalarranz} it is proven, for generic games, that the Spohn CI curve for the one-edge model is an irreducible complete intersection curve in the Segre variety $(\mathbb{P}^{1})^{n-2} \times \mathbb{P}^3$. Moreover the authors give an explicit formula for its degree and genus. In the spirit of Datta's universality theorem for Nash equilibria, they show that any affine real algebraic variety $S \subseteq \mathbb{R}^m$ defined by $k$ polynomials with $k<m$ can be represented as the Spohn CI variety of an $n$-person game for one-edge Bayesian networks on $n$ binary random variables.
 \bigskip

\subsection*{Acknowledgements}
Both authors visited Konstanz in November 2021.
The ideas we encountered during our visit led to this article, and to  the name Konstanz matrix.
We thank Mantas Radzvilas, Gerard~Rothfus and Wolfgang Spohn for inspiring discussions about
dependency equilibria. We are grateful to
  Fulvio Gesmundo, Chiara Meroni,  Mateusz Micha{\l}ek and Kristian Ranestad for
  communications that greatly helped this project.
   Happy 60th Birthday to Giorgio Ottaviani, with mille grazie for being a fantastic
   teacher of applicable algebraic geometry.

\bigskip

 \bigskip \medskip

\noindent
{\bf Authors' addresses:}

\smallskip

\noindent Irem Portakal, Technical University of Munich
\hfill {\tt mail@irem-portakal.de}

\noindent Bernd Sturmfels,
MPI-MiS Leipzig and UC Berkeley
\hfill {\tt bernd@mis.mpg.de}
\end{document}